\documentclass[leqno,12pt]{amsart}
\usepackage{amssymb}
\usepackage{amsmath}
\usepackage{enumerate}
\usepackage{amsfonts}
\usepackage{hyperref}
\usepackage{mathrsfs}

\usepackage[headheight=18pt, top=20mm, bottom=20mm, left=22mm, right=22mm]{geometry}

\usepackage{tikz}
\usepackage{pgfplots}
\usetikzlibrary{arrows.meta}
\pgfplotsset{compat=1.18}

\definecolor{darkgreen}{RGB}{45, 119, 75}

\newcommand{\supp}{\text{\rm supp}}

\newtheorem{theorem}{Theorem}[section]
\newtheorem{corollary}[theorem]{Corollary}
\newtheorem{lemma}[theorem]{Lemma}
\newtheorem{proposition}[theorem]{Proposition}
\newtheorem{remark}[theorem]{Remark}
\newtheorem{definition}[theorem]{Definition}

\numberwithin{equation}{section}

\hypersetup{
    colorlinks=true,
    linkcolor= blue,
    citecolor =cyan,
    urlcolor = teal,
}

\makeatletter
\@ifundefined{c@part}{\newcounter{part}}{}

\renewcommand\part{%
  \@ifstar{\@spart}{\@part}%
}

\def\@part#1{%
  \refstepcounter{part}%
  \addcontentsline{toc}{part}{\protect\numberline{\thepart}#1}%
  \par\vspace{1\bigskipamount}
  \begin{center}%
    \normalfont\large\bfseries Part\ \thepart\quad #1%
  \end{center}%
  \vspace{1\bigskipamount}%
  \@afterheading
}

\def\@spart#1{%
  \addcontentsline{toc}{part}{#1}%
  \par\vspace{1\bigskipamount}
  \begin{center}%
    \normalfont\large\bfseries #1%
  \end{center}%
  \vspace{1\bigskipamount}%
  \@afterheading
}
\makeatother
  
\begin{document}

\title[Atomic Characterizations of Dunkl Hardy Spaces]{$H^p$ spaces in the Dunkl setting \\
meet Coifman--Weiss--type atoms}

\subjclass[2020]{Primary 42B30; Secondary 42B25, 42B35, 51F15}
\keywords{Dunkl operators, Dunkl Laplacian, Hardy spaces,
atomic decomposition, Coifman--Weiss atoms, operator atoms,
square functions, reflection groups}

\author[Jacek Dziubański]{Jacek Dziubański}
\author[Agnieszka Hejna-Łyżwa]{Agnieszka Hejna-Łyżwa}

\begin{abstract}
{Let $\Delta_k$ be the Dunkl Laplacian associated with an arbitrary
root system and a nonnegative multiplicity function. For every
$0<p\leq1$, a Coifman-Weiss type atomic  characterization of the 
Hardy space $H^p_{\mathrm{Dunkl}}$ is established. More precisely, it is proved that the space 
$H^p_{\mathrm{Dunkl}}$, which is originally defined by a relevant square function,  coincides with the space generated by
$({\rm CW},p,2)$-atoms, that is, atoms supported on Euclidean balls and satisfying cancellation
conditions against all polynomials of degree $ \leq s_p$, where
\[
    s_p=\left\lfloor \mathbf N\left(\frac1p-1\right)\right\rfloor 
\]
and $\mathbf N$ is the homogeneous dimension of the underlying
Dunkl measure. The corresponding  quasi-norms are   equivalent. 
We also show that the same space is obtained when the $L^2$ size
condition in the definition of  atoms is replaced by the 
$L^\infty$ size condition. The strategy  of the proof is to use an  operator-type atomic decomposition
associated with the Dunkl Laplacian, and then  prove that 
 each such operator atom  can be written    a linear combination of $({\rm CW},p,2)$-atoms.
}
\end{abstract}

\address{Jacek Dziubański, Uniwersytet Wroc\l awski,
Instytut Matematyczny,
Pl. Grunwaldzki 2,
50-384 Wroc\l aw,
Poland}
\email{jdziuban@math.uni.wroc.pl}

\address{Agnieszka Hejna-Łyżwa, Uniwersytet Wroc\l awski,
Instytut Matematyczny,
Pl. Grunwaldzki 2,
50-384 Wroc\l aw,
Poland}
\email{hejna@math.uni.wroc.pl}

\maketitle

\section{Introduction and main results}

Consider the Euclidean space $\mathbb R^N$ equipped with a root system $\mathcal R$ and a multiplicity function $k\geq 0$. In \cite{DzHL_atom}, for $0<p\leq 1$, the authors studied $H^p$ spaces in the Dunkl setting, which were originally defined by means of a square function associated with the Dunkl heat semigroup. To be more precise, let $H_t$ be the Dunkl heat semigroup generated by the Dunkl Laplace operator $\Delta_k$. Let $dw$ denote the associated measure, which is invariant under the reflection group $G$ associated with the root system $R$.

The classical real-variable theory of Hardy spaces, including their
atomic characterization, originates in the works of Fefferman and
Stein~\cite{Feff-Stein}, Coifman~\cite{Coifman}, and Coifman and
Weiss~\cite{CW}; the related theory of tent spaces was subsequently
introduced by Coifman, Meyer, and Stein~\cite{CMS}. Following the classical theory,  $H^p$-spaces in the Dunkl setting can be defined as follows.

\begin{definition}\label{def:Hp_Dunkl}
Let $0<p\leq1$.
For $f\in L^2(dw)$, define
\begin{equation}\label{eq:square_conic}
    Sf(\mathbf x)
    =
    \left(
        \int_0^\infty
        \int_{\|\mathbf x-\mathbf y\|<t}
        |t^2\Delta_k H_{t^2}f(\mathbf y)|^2
        \frac{dw(\mathbf y)}{w(B(\mathbf x,t))}
        \frac{dt}{t}
    \right)^{1/2}.
\end{equation}
Set
\[
    \mathbb H^p_{\mathrm{Dunkl}}
    =
    \{f\in L^2(dw): Sf\in L^p(dw)\},
    \qquad
    \|f\|_{\mathbb H^p_{\mathrm{Dunkl}}}
    =
    \|Sf\|_{L^p(dw)}.
\]
The Hardy space $H^p_{\mathrm{Dunkl}}$ is the completion
of $\mathbb H^p_{\mathrm{Dunkl}}$ with respect to this
quasi-norm.

More explicitly, two Cauchy sequences $(f_n)$ and $(g_n)$
in $\mathbb H^p_{\mathrm{Dunkl}}$ are identified if
\[
    \lim_{n\to\infty}
    \|f_n-g_n\|_{\mathbb H^p_{\mathrm{Dunkl}}}=0.
\]
The quasi-norm of the resulting equivalence class is
\[
    \|[(f_n)]\|_{H^p_{\mathrm{Dunkl}}}
    =
    \lim_{n\to\infty}
    \|f_n\|_{\mathbb H^p_{\mathrm{Dunkl}}}.
\]
\end{definition}

It was proved in \cite[Theorem 4.10]{DzHL_atom} that the space $H^p_{\rm Dunkl}$ admits atomic decomposition into atoms defined in the spirit of~\cite{Hofman} and~\cite{Duong} as follows. Let $\mathbf N$ denote the homogeneous dimension of the measure $dw$. Fix a positive integer $M>\mathbf{N}(2-p)/4p$. We say that a function $\boldsymbol a$ is a $ (p,2,M,\Delta_k)$-atom if there is a function $\boldsymbol b\in \mathcal D(\Delta_k^M)$ and a ball $B=B(\mathbf x_0,r_B)$  such that 
    \begin{equation*}
        \boldsymbol{a}=\Delta_k^M \boldsymbol{b},
    \end{equation*}
    \begin{equation*}
        \supp \, \boldsymbol{b}\subseteq \mathcal O(B)\quad (\text{\rm orbit of } B \ \text{\rm under the action of } G), 
    \end{equation*}
    and
\begin{equation*}
    \| (r_B^2 \Delta_k)^m \boldsymbol{b}\|_{L^2(dw)}\leq r_B^{2M} w(B)^{\frac{1}{2}-\frac{1}{p}}, \quad m=0,1,\dots, M.
\end{equation*}  
Theorem 4.10 of \cite{DzHL_atom} asserts that each element $f$ of $H^p_{\rm Dunkl}$ is identified with a tempered distribution  which can be written as:
$$ f=\sum_j \lambda_j \boldsymbol a_j, \quad \sum_j |\lambda_j|^p\leq C\| f\|_{H^p_{\rm Dunkl}}^p$$
where $\lambda_j\in\mathbb C$, $\boldsymbol a_j$ are $(p,2,M,\Delta_k)$-atoms. The convergence of the series is in $H^p_{\rm Dunkl}$ and in $\mathcal S'(\mathbb R^N)$ as well. Conversely, every series of $(p,2,M,\Delta_k)$-atoms with
$\ell^p$-summable coefficients converges in
$H^p_{\mathrm{Dunkl}}$ and in $\mathcal S'(\mathbb R^N)$, and its sum
satisfies
\[
    \big\|\sum_{j=1}^{\infty}\lambda_j\boldsymbol a_j
    \big\|_{H^p_{\mathrm{Dunkl}}}^p
    \leq C\sum_{j=1}^{\infty}|\lambda_j|^p.
\]

The purpose of this paper is to replace the operator atoms described
above by Coifman--Weiss-type atoms supported on single Euclidean balls
and satisfying classical polynomial cancellation conditions. This
passage is not immediate, since an operator atom is supported in the
full $G$-orbit of a ball, whose components may be widely separated.
The main difficulty is to localize such an atom without destroying its
moments and while retaining a uniform $\ell^p$ bound for the resulting
coefficients. We overcome this difficulty by a finite multiscale
decomposition of the orbit, combined with localized polynomial
corrections (see Theorem~\ref{prop:operator_to_polynomial_atoms}). The argument applies to every $0<p\leq1$, arbitrary root
systems, and nonnegative multiplicity functions.

For every nonnegative integer $s$, let
\[
    \mathcal P_s
    =
    \operatorname{span}_{\mathbb R}
    \{\mathbf x^\alpha:
      \alpha\in\mathbb N_0^N,\ |\alpha|\leq s\}
\]
denote the space of real polynomials of degree at most $s$.
For $0<p\leq1$, set
\[
    s_p
    =
    \left\lfloor
        \mathbf N\left(\frac1p-1\right)
    \right\rfloor.
\]

\begin{definition}[Coifman-Weiss type atoms, cf.~\cite{CW}]\label{def:p_q_atom}
Let $0<p\leq1$ and $q\in [1,\infty]$, $p<q$.
A measurable function $a$ on $\mathbb R^N$ is called
a $({\rm CW},p,q)$-atom if there exists a Euclidean ball
$B=B(\mathbf x_0,R)$, with $R>0$, such that:
\begin{enumerate}
    \item
    $a=0$ almost everywhere on
    $\mathbb R^N\setminus B$;
    \item
    \[
        \|a\|_{L^q(dw)}
        \leq w(B)^{1/q-1/p}.
    \]
    \item
    for every $P\in\mathcal P_{s_p}$,
    \[
        \int_{\mathbb R^N}
        a(\mathbf x)P(\mathbf x)\,dw(\mathbf x)=0.
    \]
\end{enumerate}
Each atom is identified with the tempered distribution
defined by
\[
    \langle a,\varphi\rangle
    =
    \int_{\mathbb R^N}
    a(\mathbf x)\varphi(\mathbf x)\,dw(\mathbf x),
    \qquad
    \varphi\in\mathcal S(\mathbb R^N).
\]
\end{definition}

\begin{definition}\label{def:Hp_atom}
Let $0<p\leq1$ and $q\in [1,\infty]$, $p<q$.
The atomic Hardy space $H^p_{\mathrm{CW},q}$ consists
of all tempered distributions
$F\in\mathcal S'(\mathbb R^N)$ admitting a representation
\[
    F=\sum_{j=1}^\infty\lambda_j a_j
    \quad\text{in }\mathcal S'(\mathbb R^N),
\]
where each $a_j$ is a $({\rm CW},p,q)$-atom and
$(\lambda_j)_{j\geq1}\in\ell^p(\mathbb C)$.
Here convergence in $\mathcal S'(\mathbb R^N)$ means that
\[
    \langle F,\varphi\rangle
    =
    \lim_{n\to\infty}
    \sum_{j=1}^n\lambda_j
    \int_{\mathbb R^N}
    a_j(\mathbf x)\varphi(\mathbf x)\,dw(\mathbf x)
\]
for every $\varphi\in\mathcal S(\mathbb R^N)$.

The quasi-norm is defined by
\[
    \|F\|_{H^p_{\mathrm{CW},q}}
    =
    \inf
    \Big\{
        \big(\sum_{j=1}^\infty|\lambda_j|^p\big)^{1/p}
        :
        F=\sum_{j=1}^\infty\lambda_j a_j
        \text{ in }\mathcal S'(\mathbb R^N),
        \quad
        a_j\text{ are }({\rm CW},p,q)\text{-atoms}
    \Big\}.
\]
We write $H^p_{\mathrm{CW}}=H^p_{\mathrm{CW},2}$.
\end{definition}

We are now in a position to state the main result of this paper.

\begin{theorem}\label{teo:H_p_coincides}
Let $0<p\leq1$.
Under the natural identification with tempered distributions,
\[
    H^p_{\mathrm{Dunkl}}
    =
    H^p_{\mathrm{CW},2}
    =
    H^p_{\mathrm{CW},\infty}
\]
with equivalent quasi-norms.
\end{theorem}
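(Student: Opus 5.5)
We prove the chain of inclusions
\[
    H^p_{\mathrm{Dunkl}}\subseteq H^p_{\mathrm{CW},\infty}\subseteq H^p_{\mathrm{CW},2}\subseteq H^p_{\mathrm{Dunkl}},
\]
each with a quasi-norm estimate. The middle inclusion is trivial: by Hölder's inequality every $({\rm CW},p,\infty)$-atom is a $({\rm CW},p,2)$-atom, since $\|a\|_{L^2(dw)}\leq w(B)^{1/2}\|a\|_{L^\infty}\leq w(B)^{1/2-1/p}$.

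For $H^p_{\mathrm{CW},2}\subseteq H^p_{\mathrm{Dunkl}}$ it suffices to show $\|Sa\|_{L^p(dw)}\leq C$ uniformly over $({\rm CW},p,2)$-atoms $a$ (so that $a\in\mathbb H^p_{\mathrm{Dunkl}}$). One then sums with $\ell^p$ coefficients and checks that the $\mathcal S'$-limit agrees with the $H^p_{\mathrm{Dunkl}}$-limit, using the $\mathcal S'$ convergence from \cite[Theorem 4.10]{DzHL_atom}. For an atom supported in $B=B(\mathbf x_0,R)$, split $\mathbb R^N$ into $\mathcal O(4B)$ and its complement.
\begin{itemize}
\item On $\mathcal O(4B)$, use the $L^2$-boundedness of $S$, Hölder's inequality, and $w(\mathcal O(4B))\leq |G|\,w(4B)\leq C w(B)$.
\item Off the orbit, use the moment condition. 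Write $t^2\Delta_kH_{t^2}a(\mathbf y)=\int q_t(\mathbf y,\mathbf z)a(\mathbf z)\,dw(\mathbf z)$ and subtract the Taylor polynomial of degree $s_p$ of $\mathbf z\mapsto q_t(\mathbf y,\mathbf z)$ at $\mathbf x_0$.
\end{itemize}
The known Dunkl heat kernel bounds (Gaussian in the orbit distance $d(\mathbf y,\mathbf z)$, together with derivative estimates in $\mathbf z$ of all orders) give
\[
    |t^2\Delta_kH_{t^2}a(\mathbf y)|\lesssim (R/t)^{s_p+1}\, w(B(\mathbf x_0,t))^{-1}e^{-c\,d(\mathbf y,\mathbf x_0)^2/t^2}\,\|a\|_{L^1(dw)}.
\]
This yields $Sa(\mathbf x)\lesssim (R/d(\mathbf x,\mathbf x_0))^{s_p+1}w(B(\mathbf x_0,d(\mathbf x,\mathbf x_0)))^{-1}w(B)^{1-1/p}$. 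Integrating the $p$-th power over dyadic annuli in $d$ converges because $(s_p+1)p>\mathbf N(1-p)$, by the definition of $s_p$ and the doubling bounds $w(B(\mathbf x,\lambda r))\geq c\lambda^N w(B(\mathbf x,r))$. (If derivative estimates in $\mathbf z$ of order $s_p+1$ are only available in a weaker form, I would instead estimate via Dunkl translations, but I expect the smooth Taylor argument to work since the kernel is smooth.)

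The main step is $H^p_{\mathrm{Dunkl}}\subseteq H^p_{\mathrm{CW},\infty}$. By \cite[Theorem 4.10]{DzHL_atom} it suffices to show that every $(p,2,M,\Delta_k)$-atom $\boldsymbol a=\Delta_k^M\boldsymbol b$ satisfies $\boldsymbol a=\sum_i\mu_i a_i$ with $({\rm CW},p,\infty)$-atoms $a_i$ and $\sum|\mu_i|^p\leq C$. Two properties of $\boldsymbol a$ are needed.
\begin{itemize}
\item \emph{Moments.} Since $\Delta_k$ is symmetric on $L^2(dw)$ and maps polynomials of degree $\leq m$ to degree $\leq m-2$, and $\boldsymbol b$ is compactly supported, we get $\int\boldsymbol aP\,dw=\int\boldsymbol b\,\Delta_k^MP\,dw$, which vanishes once $2M>s_p$. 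This should follow from $M>\mathbf N(2-p)/4p$, since $\mathbf N(2-p)/(2p)\geq \mathbf N(1/p-1)$; this needs checking, and one may increase $M$ if necessary.
\item \emph{Size and support.} $\boldsymbol a$ lies in $L^2$ with $\|\boldsymbol a\|_{L^2}\leq w(B)^{1/2-1/p}$ and is supported in $\mathcal O(B)=\bigcup_{\sigma\in G}\sigma B$.
\end{itemize}

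The obstacle is that the pieces $\sigma B$ may be far apart, so $\boldsymbol a$ is not one atom. Restricting it to a single $\sigma B$ destroys the moments, while putting it in one ball containing the whole orbit gives a bad normalization. I would group the points $\sigma\mathbf x_0$ multiscale. Order the finitely many distinct distances between orbit points, $r_B=\rho_0<\rho_1<\dots<\rho_L$, and for each scale $\rho_\ell$ form clusters: connected components of the graph joining $\sigma\mathbf x_0,\sigma'\mathbf x_0$ when their distance is $\leq C\rho_\ell$. This gives a tree of at most $|G|$ levels. Start from the pieces $\boldsymbol a\chi_{\sigma B}$ and proceed through the levels; at each merge, subtract from every child a correction $\sum_\alpha c_\alpha\phi_\alpha$. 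Here $\phi_\alpha$ is a fixed dual basis of $\mathcal P_{s_p}$ in $L^2(\sigma B,dw)$ (bounded, with $\|\phi_\alpha\|_\infty\lesssim w(\sigma B)^{-1}$ after scaling), chosen so that the corrected child has vanishing moments up to order $s_p$. The corrections are passed up to a single ball of the parent cluster, of radius $\sim\rho_\ell$. Since the moments of the child are bounded by $\|\boldsymbol a\|_{L^1}\rho_\ell^{|\alpha|}$, the normalized corrections in the parent ball are $({\rm CW},p,\infty)$-atoms with coefficient $\lesssim (w(B)/w(B(\cdot,\rho_\ell)))^{1-1/p}\leq C$, using $w(B(\cdot,\rho_\ell))\geq w(B)$ and $p\leq1$. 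Finitely many levels and pieces give a uniformly bounded coefficient sum. The root cluster inherits the total vanishing moments of $\boldsymbol a$, so nothing remains there.

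The $L^2$ pieces $\boldsymbol a\chi_{\sigma B}$ minus their corrections are $({\rm CW},p,2)$-atoms. To obtain $L^\infty$ atoms, I would additionally use the standard Calderón--Zygmund-type decomposition of an $L^2$ atom into $L^\infty$ atoms on the doubling space $(\mathbb R^N,\|\cdot\|,dw)$, exactly as in Coifman--Weiss~\cite{CW}. Together with the moment bookkeeping of the previous paragraph, this is the step I expect to be the main obstacle.
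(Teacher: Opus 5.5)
The overall architecture matches the paper: the square-function bound for $({\rm CW},p,2)$-atoms and the Calder\'on--Zygmund passage from $L^2$ to $L^\infty$ atoms are essentially its arguments. The gap is in the orbit decomposition, the heart of the theorem, and it occurs at your coefficient estimate. Since $p\leq 1$, the exponent $1-1/p$ is nonpositive, so
\[
    \Big(\frac{w(B)}{w(B(\cdot,\rho_\ell))}\Big)^{1-\frac1p}
    =\Big(\frac{w(B(\cdot,\rho_\ell))}{w(B)}\Big)^{\frac1p-1}
    \gtrsim\Big(\frac{\rho_\ell}{r_B}\Big)^{N(\frac1p-1)}.
\]
This is unbounded over the family of operator atoms. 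The inequality $w(B(\cdot,\rho_\ell))\geq w(B)$ bounds it from below, not from above.

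Better accounting cannot fix this. The only properties of $\boldsymbol a$ you keep are global vanishing moments up to order $s_p$, the $L^2$ bound, and support in $\mathcal O(B)$. For $p<1$ these do not imply a uniform $H^p$ bound. For instance, take $k\equiv0$, $p$ with $\mathbf N(1/p-1)<1$ (so $s_p=0$), a reflection $\sigma\in G$, $B=B(\mathbf y_0,r_B)$, and
\[
    f=2^{-1/2}|B|^{-1/p}(\chi_B-\chi_{\sigma B}).
\]
Then $f$ has all three properties. Let $\rho=\|\mathbf y_0-\sigma\mathbf y_0\|$. For $2r_B<\|\mathbf x-\mathbf y_0\|<\rho/8$, the grand maximal function of $f$ is $\gtrsim|B|^{1-1/p}\|\mathbf x-\mathbf y_0\|^{-N}$. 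Hence $\|f\|_{H^p}^p\gtrsim(\rho/r_B)^{N(1-p)}\to\infty$.

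What is missing is a use of the structure $\boldsymbol a=\Delta_k^M\boldsymbol b$ beyond global cancellation, namely decay of \emph{localized} moments. Let $\chi$ be a cutoff at scale $R$ equal to $1$ near one cluster of orbit points and $0$ near the others. Then $\int\boldsymbol a\,P\chi\,dw=\int\boldsymbol b\,\Delta_k^M(P\chi)\,dw$. By \eqref{eq:small_x}, the Leibniz rule and Lemma~\ref{lem:polynomial_derivatives_sup}, $\|\Delta_k^M(P\chi)\|_\infty\lesssim R^{-2M}\sup_U|P|$, while $\|\boldsymbol b\|_{L^1(dw)}\lesssim r_B^{2M}w(B)^{1-1/p}$. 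So the moments of each cluster piece gain the factor $(r_B/R)^{2M}$ (Lemma~\ref{lem:operator_atom_localized_moments}). Since $2M>\gamma=\mathbf N(1/p-1)$, this beats the loss $(w(U)/w(B))^{1/p-1}\lesssim(R/r_B)^{\gamma}$.

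The paper runs the clustering at all dyadic scales $D_\ell=16\cdot2^\ell r_B$, an unbounded number of levels. Its transition coefficients are $\lesssim2^{-\ell(2M-\gamma)}$, hence $\ell^p$-summable. With your $|G|$ levels, consecutive $\rho_\ell$ can be arbitrarily far apart. Moving a correction across a gap $\rho_\ell\ll\rho_{\ell+1}$ in one step costs a power of $\rho_{\ell+1}/\rho_\ell$ that the decay $(r_B/\rho_\ell)^{2M}$ does not compensate. Filling each gap with dyadic scales brings you back to the paper's scheme.

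The $L^2\to L^\infty$ step you single out as the main obstacle is, by contrast, routine. (Minor: in the annular sum for $\|Sa\|_{L^p(dw)}$ you need the upper growth bound $w(B(\mathbf x_0,2^jR))\leq C2^{j\mathbf N}w(B)$, not the lower one.)
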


Thus, despite the nonlocal reflection structure of the Dunkl
Laplacian (see~\eqref{eq:laplace_formula}), the corresponding Hardy spaces admit a description in terms
of atoms having purely Euclidean supports and classical polynomial
cancellation.

\begin{remark}
    The theorem also yields an atomic decomposition in terms of
    Coifman--Weiss $({\rm CW},p,q)$-atoms for every $1\leq q\leq \infty$, $p<q$, because every
    $({\rm CW},p,\infty)$-atom is a $({\rm CW},p,q)$-atom, since
    \[
        \|a\|_{L^q(dw)}
        \leq w(B)^{1/q}\|a\|_{L^\infty(dw)}
        \leq w(B)^{1/q-1/p}.
    \]
    Thus, the decomposition obtained in the theorem is simultaneously
    a decomposition into $({\rm CW},p,q)$-atoms, with the same coefficients.

    Furthermore, if $s\geq s_p$, the atomic characterization may be
formulated using atoms satisfying cancellation conditions against all
polynomials in $\mathcal P_s$. The corresponding requirement on the
order of the operator atoms is explained in
Remark~\ref{rem:higher_order_cancellation}. 
\end{remark}

For $p=1$, the $H^1_{\rm Dunkl}$-spaces  were studied in \cite{ADzH} (see also~\cite{ABDH}) and were characterized there  by relevant square functions, Riesz transforms, maximal functions, and $(1,2,M,\Delta_k)$-atoms. In \cite{DH-atom} the authors proved characterization of $H^1_{\rm Dunkl}$ by Coifman-Weiss-type atoms with the cancellation condition against constant functions. The proof was based on estimates of kernels occurring in a Calder\'on reproducing formula.

The paper is organized as follows. After recalling the required facts
from Dunkl analysis (Section~\ref{sec:preliminaries}) and establishing auxiliary estimates for
polynomials (Section~\ref{sec:poly}), we decompose operator atoms into
Coifman--Weiss-type atoms (Section~\ref{subsec:operator_atoms}). We then prove the converse inclusion using
heat-kernel estimates (Section~\ref{sec:Hardy}) and conclude the equivalence of the corresponding
Hardy-space quasi-norms (Section~\ref{sec:bounded_atomic_decompositions}).

\section*{Acknowledgements}
The authors are grateful to Charles F. Dunkl for his careful reading
of the manuscript and for his kind and valuable comments.

\section{Dunkl analysis}\label{sec:preliminaries}
\subsection{Dunkl theory}
In this section, we present basic facts concerning the theory of the Dunkl operators.   For more details, we refer the reader to~\cite{Dunkl},~\cite{Roesle99},~\cite{Roesler3}, and~\cite{Roesler-Voit}. 

We consider the Euclidean space $\mathbb{R}^N$ with the scalar product $\langle \mathbf{x},\mathbf y\rangle=\sum_{j=1}^N x_jy_j
$, where $\mathbf x=(x_1,...,x_N)$, $\mathbf y=(y_1,...,y_N)$, and the norm $\| \mathbf x\|^2=\langle \mathbf x,\mathbf x\rangle$.

A {\it normalized root system}  in $\mathbb{R}^N$ is a finite set  $\mathcal R\subset \mathbb{R}^N\setminus\{0\}$ such that $\mathcal{R} \cap \alpha  \mathbb{R} = \{\pm \alpha\}$,  $\sigma_\alpha (\mathcal{R})=\mathcal{R}$, and $\|\alpha\|=\sqrt{2}$ for all $\alpha\in \mathcal{R}$, where $\sigma_\alpha$ is defined by $\sigma_\alpha (\mathbf x)=\mathbf x-2\frac{\langle \mathbf x,\alpha\rangle}{\|\alpha\|^2} \alpha$.
The finite group $G$ generated by the reflections $\sigma_{\alpha}$, $\alpha \in \mathcal{R}$, is called the {\it reflection group} of the root system. A~{\textit{multiplicity function}} is a $G$-invariant function $k:\mathcal{R}\to\mathbb C$, which will be fixed and non-negative throughout this paper.  

The associated $G$-invariant measure $dw$ is defined by $dw(\mathbf x)=w(\mathbf x)\, d\mathbf x$, where 
 \begin{equation}\label{eq:measure}
w(\mathbf x)=\prod_{\alpha\in \mathcal{R}}|\langle \mathbf x,\alpha\rangle|^{k(\alpha)}.
\end{equation}
Let $\mathbf{N}=N+\sum_{\alpha \in \mathcal{R}}k(\alpha)$. Then, 
\begin{equation}\label{eq:t_ball} w(B(t\mathbf x, tr))=t^{\mathbf{N}}w(B(\mathbf x,r)) \ \ \text{ for all } \mathbf x\in\mathbb{R}^N, \ t,r>0.
\end{equation}
Observe that there is a constant $C>1$ such that for all $\mathbf{x} \in \mathbb{R}^N$ and $r>0$, we have
\begin{equation}\label{eq:balls_asymp}
C^{-1}w(B(\mathbf x,r))\leq  r^{N}\prod_{\alpha \in \mathcal{R}} (|\langle \mathbf x,\alpha\rangle |+r)^{k(\alpha)}\leq C w(B(\mathbf x,r)),
\end{equation}
so $dw(\mathbf x)$ is doubling.

Moreover, by \eqref{eq:balls_asymp}, there exists a constant $C \geq 1$ such that for every $\mathbf{x} \in \mathbb{R}^N$,
\begin{equation}\label{eq:growth}
C^{-1} \left(\frac{R}{r}\right)^{ N}\leq \frac{w(B(\mathbf x, R))}{w(B(\mathbf x, r))}\leq C \left(\frac{R}{r}\right)^{\mathbf{N}}\ \ \text{ for } 0<r<R.
\end{equation}

For $\mathbf{x},\mathbf{y} \in \mathbb{R}^N$, let 
\begin{equation}\label{eq:distance_of_orbits}
    d(\mathbf x,\mathbf y)=\min_{\sigma\in G}\| \sigma(\mathbf x)-\mathbf y\|.    
\end{equation}

For a Lebesgue measurable set $A$ (in particular, for $A=B(\mathbf{x}_0,r)$) we denote
\begin{equation}
    \mathcal{O}(A)=\{\sigma(\mathbf{z})\;:\; \sigma \in G,\, \mathbf{z} \in A\}.
\end{equation}

For $\xi \in \mathbb{R}^N$, the {\it Dunkl operators} $T_\xi$  are the following $k$-deformations of the directional derivatives $\partial_\xi$ by   difference operators:
\begin{equation}\label{eq:T_xi}
     T_\xi f(\mathbf x)= \partial_\xi f(\mathbf x) + \sum_{\alpha\in \mathcal{R}} \frac{k(\alpha)}{2}\langle\alpha ,\xi\rangle\frac{f(\mathbf x)-f(\sigma_\alpha(\mathbf{x}))}{\langle \alpha,\mathbf x\rangle}.
\end{equation}
 We simply write $T_j$, if $\xi=e_j$, {where $\{e_j\}_{j=1}^N$ stands for the canonical basis in $\mathbb{R}^N$.}  

The Dunkl operators $T_{\xi}$, which were introduced in~\cite{Dunkl}, commute with each other and for suitable functions $f$ and $g$, the Dunkl operators satisfy
\[
    \int_{\mathbb R^N}T_jf(\mathbf x)g(\mathbf x)\,dw(\mathbf x)
    =
    -\int_{\mathbb R^N}f(\mathbf x)T_jg(\mathbf x)\,dw(\mathbf x).
\]
We shall use the following elementary degree property of Dunkl
operators:
\begin{equation}\label{eq:polynomial_lower}
     T_j\mathcal P_m\subseteq\mathcal P_{m-1},
    \qquad j=1,\ldots,N.
\end{equation}
where $\mathcal P_m=\{0\}$ for $m<0$.

For a multi-index $\boldsymbol{\beta}=(\beta_1,\beta_2,\ldots,\beta_N)\in \mathbb N_0^N$, we denote
\begin{equation}\label{eq:iterated_der_ord}
    |\boldsymbol{\beta}|=\beta_1+\ldots +\beta_N, \ \partial^{\mathbf{0}}=I, \ \ \partial^{\boldsymbol{\beta}}=\partial_1^{\beta_1} \circ \ldots \circ \partial_N^{\beta_N}, \ \ T^{\mathbf{0}}=I, \ \ T^{\boldsymbol{\beta}}=T_{1}^{\beta_1} \circ \ldots \circ T_{N}^{\beta_N}.
\end{equation}
In what follows, we denote by
\[
    \nabla
    =
    \left(\partial_{x_1},\ldots,\partial_{x_N}\right)
    \qquad\text{and}\qquad
    \nabla_k
    =
    \left(T_1,\ldots,T_N\right)
\]
the usual Euclidean gradient and the Dunkl gradient, respectively.

We shall also use the following comparison between Dunkl and
Euclidean derivatives, proved in~\cite[(6.4)]{DzHL_atom}: for every
multi-index $\beta$,
\begin{equation}\label{eq:small_x}
    \|T^\beta f\|_{L^\infty}
    \leq
    C_\beta
    \sum_{\beta'\in\mathbb N_0^N, |\beta'|=|\beta|
                    }
    \|\partial^{\beta'}f\|_{L^\infty}
\end{equation}
for every $f\in C^\infty(\mathbb R^N)$ for which the right-hand side
is finite.

\subsection{The Dunkl kernel and the Dunkl transform}

For a fixed $\mathbf y\in\mathbb R^N$, the Dunkl kernel
$\mathbf x\mapsto E(\mathbf x,\mathbf y)$ is the unique analytic
solution of
\begin{equation}\label{eq:Dunkl_kernel_definition}
    T_\xi f(\mathbf x)
    =
    \langle\xi,\mathbf y\rangle f(\mathbf x),
    \qquad
    f(\mathbf 0)=1.
\end{equation}
The function $E$ extends uniquely to a holomorphic function on
$\mathbb C^N\times\mathbb C^N$ and satisfies
\[
    E(\mathbf z,\mathbf w)=E(\mathbf w,\mathbf z),
    \qquad
    \mathbf z,\mathbf w\in\mathbb C^N.
\]
The positivity theorem for the Dunkl intertwining operator, proved
by R\"osler~\cite{Roesle99}, implies the sharp estimate
\begin{equation}\label{eq:Dunkl_kernel_bound}
    |E(i\boldsymbol\xi,\mathbf x)|
    \leq 1,
    \qquad
    \mathbf x,\boldsymbol\xi\in\mathbb R^N.
\end{equation}

For $f\in L^1(dw)$, its Dunkl transform is defined by
\begin{equation}\label{eq:Dunkl_transform}
    \mathcal Ff(\boldsymbol\xi)
    =
    \mathbf c_k^{-1}
    \int_{\mathbb R^N}
    f(\mathbf x)E(\mathbf x,-i\boldsymbol\xi)\,dw(\mathbf x),
    \qquad
    \mathbf c_k
    =
    \int_{\mathbb R^N}
    e^{-\|\mathbf x\|^2/2}\,dw(\mathbf x).
\end{equation}
In particular,~\eqref{eq:Dunkl_kernel_bound} gives
\begin{equation}\label{eq:Dunkl_transform_L1_Linfty}
    \|\mathcal Ff\|_{L^\infty(dw)}
    \leq
    \mathbf c_k^{-1}\|f\|_{L^1(dw)}.
\end{equation}

The Dunkl transform was introduced by C.F. Dunkl in
\cite{D5}, where the Plancherel theorem was also established.
Subsequently, de Jeu~\cite{dJ1} obtained a uniform bound for the
kernel $E(\mathbf x,i\boldsymbol\xi)$, which in particular yields
the $L^1(dw)$-to-$L^\infty(dw)$ estimate for the Dunkl transform.
R\"osler's positivity theorem~\cite{Roesle99} later yielded the
sharp estimate~\eqref{eq:Dunkl_kernel_bound}.

The Dunkl transform is an automorphism of
$\mathcal S(\mathbb R^N)$ and satisfies
\begin{equation}\label{eq:T_j_transform}
    \mathcal F(T_jf)(\boldsymbol\xi)
    =
    i\xi_j\mathcal Ff(\boldsymbol\xi),
    \qquad
    f\in\mathcal S(\mathbb R^N),
    \quad j=1,\ldots,N.
\end{equation}
Moreover, it extends uniquely to an isometry on $L^2(dw)$:
\begin{equation}\label{eq:Plancherel}
    \|\mathcal Ff\|_{L^2(dw)}
    =
    \|f\|_{L^2(dw)},
    \qquad f\in L^2(dw).
\end{equation}
For the original Plancherel theorem, see
\cite[Corollary~2.7]{D5}; see also
\cite[Theorem~4.26]{dJ1}.
\subsection{Dunkl Laplacian, Dunkl heat semigroup and Dunkl heat kernel}
The \textit{Dunkl Laplacian} associated with $\mathcal{R}$ and $k$  is the differential-difference operator $\Delta_k=\sum_{j=1}^N T_{j}^2$.
It was introduced in~\cite{Dunkl}, where it was also proved that $\Delta_k$ acts on $C^2(\mathbb{R}^N)$ functions by
\begin{equation}\label{eq:laplace_formula}
    \Delta_k f(\mathbf x)=\Delta f(\mathbf x)+\sum_{\alpha\in \mathcal{R}} k(\alpha) \delta_\alpha f(\mathbf x), \text{ where }\delta_\alpha f(\mathbf x)=\frac{\partial_\alpha f(\mathbf x)}{\langle \alpha , \mathbf x\rangle} -  \frac{f(\mathbf x)-f(\sigma_\alpha (\mathbf x))}{\langle \alpha, \mathbf x\rangle^2}.
\end{equation}
Here $\Delta=\sum_{j=1}^{N}\partial_j^2$. It follows from~\eqref{eq:T_j_transform} that for all $\xi \in \mathbb{R}^N$ and $f \in \mathcal{S}(\mathbb{R}^N)$, we have
\begin{equation}\label{eq:Laplacian_on_Fourier_side}
    \mathcal{F}(\Delta_{k}f)(\xi)=-\|\xi\|^2\mathcal{F}f(\xi).
\end{equation} 
The operator $(-\Delta_{k},\mathcal{S}(\mathbb{R}^N))$ in $L^2(dw)$ is densely defined and closable. It is essentially self-adjoint on $L^2(dw)$ (see, for instance, \cite[Theorem\;3.1]{AH}).

The closure of $(-\Delta_{k},\mathcal{S}(\mathbb{R}^N))$ generates a strongly continuous and positivity-preserving contraction semigroup on $L^2(dw)$.The semigroup has the form
  \begin{equation}\label{eq:heat}
  H_t f(\mathbf x)=\mathcal F^{-1}(e^{-t\|\xi\|^2}\mathcal Ff(\xi))(\mathbf x)=\int_{\mathbb R^N} h_t(\mathbf x,\mathbf y)f(\mathbf y)\, dw(\mathbf y),
  \end{equation}
where $ h_t(\mathbf x,\mathbf y)$
 is the so-called \textit{generalized heat kernel} (or the \textit{Dunkl heat kernel}), see~\cite{R1998} .  
  For all $t>0$ and $\mathbf{x},\mathbf{y} \in \mathbb{R}^N$,  one has $h_t(\mathbf{x},\mathbf{y})>0$, and $\int_{\mathbb{R}^N}h_t(\mathbf{x},\mathbf{z})\,dw(\mathbf{z})=1$.
 Formula~\eqref{eq:heat} defines  contraction semigroups on the $L^p(dw)$-spaces, $1\leq p\leq  \infty$, which are  strongly continuous for $1\leq p<\infty$. Note that in the case $k \equiv 0$ the Dunkl heat kernel is the classical heat kernel.

 \subsection{Heat-kernel estimates}
 
 In order to prove an atomic characterization of Hardy space, we need  bounds for the Dunkl heat kernel and its derivatives. 
 
 For $\mathbf{x},\mathbf{y} \in \mathbb{R}^N$ and $t,r>0$, we denote
\begin{equation}\label{eq:V}
    V(\mathbf{x},\mathbf{y},r):=\max\{w(B(\mathbf{x},r)),w(B(\mathbf{y},r))\}, \ \ \mathcal G_t(\mathbf x,\mathbf y)=\frac{1}{V(\mathbf x,\mathbf y,\sqrt{t})}e^{-\frac {d(\mathbf x,\mathbf y)^2}{t}}.
\end{equation} 

The following theorem was proved in~\cite{DH-atom}, see also~\cite[Theorem 4.1]{ADzH}.
For more detailed upper and lower bounds for $h_t(\mathbf x,\mathbf y)$ we refer the reader to~\cite{DH-heat}.

\begin{theorem}[Theorem 4.1, \texorpdfstring{~\cite{DH-atom}}{[DH]}]\label{teo:heat_new}   For every nonnegative integer $m$ and for all multi-indices $\boldsymbol{\alpha},\boldsymbol{\beta} \in \mathbb{N}_0^N$ there are constants $C_{m,\boldsymbol{\alpha},\boldsymbol{\beta}}, c>0$ such that
  \begin{equation}\label{eq:heat2} |\partial_t^m \partial_{\mathbf x}^{\boldsymbol{\alpha}}\partial_{\mathbf y}^{\boldsymbol{\beta}} h_t(\mathbf{x},\mathbf{y})|
  \leq C_{m,\boldsymbol{\alpha},\boldsymbol{\beta}} t^{-m-\frac{|\boldsymbol{\alpha}|}{2}-\frac{|\boldsymbol{\beta}|}{2}} \Big(1+\frac{\| \mathbf x-\mathbf y\|}{\sqrt{t}}\Big)^{-2} \mathcal G_{t\slash c} (\mathbf x,\mathbf y).
  \end{equation}
\end{theorem}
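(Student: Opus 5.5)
The estimate comes from \cite{DH-atom}, and I would rebuild it from the $0$-th order bound
\[
h_t(\mathbf x,\mathbf y)\leq C\Big(1+\frac{\|\mathbf x-\mathbf y\|}{\sqrt t}\Big)^{-2}\mathcal G_{t/c}(\mathbf x,\mathbf y),
\]
proved in \cite{DH-heat}, which I would assume. The plan has three steps: reduce to first-order derivatives through the semigroup property, treat spatial derivatives with Rösler's integral representation of the Dunkl kernel, and treat time derivatives by analyticity.

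\emph{Step 1: reduction via the semigroup.} For any splitting, write
\[
h_t(\mathbf x,\mathbf y)=\int_{\mathbb R^N}h_{t/2}(\mathbf x,\mathbf z)\,h_{t/2}(\mathbf z,\mathbf y)\,dw(\mathbf z).
\]
Then $\partial^{\boldsymbol\alpha}_{\mathbf x}$ falls only on the first factor and $\partial^{\boldsymbol\beta}_{\mathbf y}$ only on the second. Hence it suffices to prove the estimate with derivatives in $\mathbf x$ alone, and then in $\mathbf y$ alone by symmetry $h_t(\mathbf x,\mathbf y)=h_t(\mathbf y,\mathbf x)$, and then to recombine them. For the recombination I need the convolution-type inequality
\[
\int \Big(1+\tfrac{\|\mathbf x-\mathbf z\|}{\sqrt t}\Big)^{-2}\mathcal G_{t/c}(\mathbf x,\mathbf z)\Big(1+\tfrac{\|\mathbf z-\mathbf y\|}{\sqrt t}\Big)^{-2}\mathcal G_{t/c}(\mathbf z,\mathbf y)\,dw(\mathbf z)\leq C\Big(1+\tfrac{\|\mathbf x-\mathbf y\|}{\sqrt t}\Big)^{-2}\mathcal G_{t/c'}(\mathbf x,\mathbf y).
\]
I would prove it by splitting the $\mathbf z$-integral according to whether $\|\mathbf x-\mathbf z\|\geq\|\mathbf x-\mathbf y\|/2$ or $\|\mathbf z-\mathbf y\|\geq\|\mathbf x-\mathbf y\|/2$; on each piece one factor supplies the polynomial decay. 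The Gaussian part follows from $d(\mathbf x,\mathbf y)\leq d(\mathbf x,\mathbf z)+d(\mathbf z,\mathbf y)$, the doubling property \eqref{eq:growth}, and the standard bound $\int\mathcal G_{t/c}(\mathbf x,\mathbf z)\,dw(\mathbf z)\leq C$.

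\emph{Step 2: time derivatives.} Since $\partial_t^m h_t=\Delta_k^m h_t$, I would use analyticity of the semigroup. First I would establish Gaussian bounds for the complex-time kernel $h_\zeta$ with $|\arg\zeta|<\pi/4$, by the Davies--Phragmén--Lindelöf argument applied to the real-time bound. Then Cauchy's formula on a circle of radius $\sim t$ around $t$ gives the factor $t^{-m}$. This requires the $(1+\|\mathbf x-\mathbf y\|/\sqrt t)^{-2}$ factor to survive in complex time, which I would check directly from the explicit formula below.

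\emph{Step 3: spatial derivatives.} I would use
\[
h_t(\mathbf x,\mathbf y)=\mathbf c_k^{-1}(2t)^{-\mathbf N/2}e^{-(\|\mathbf x\|^2+\|\mathbf y\|^2)/4t}\,E\Big(\frac{\mathbf x}{\sqrt{2t}},\frac{\mathbf y}{\sqrt{2t}}\Big)
\]
together with Rösler's formula $E(\mathbf x,\mathbf y)=\int e^{\langle\eta,\mathbf y\rangle}\,d\mu_{\mathbf x}(\eta)$, where $\mu_{\mathbf x}$ is a probability measure supported in the convex hull of $\mathcal O(\mathbf x)$. Differentiating in $\mathbf y$ under the integral produces factors $(\eta-\mathbf y)/(2t)$ multiplying $e^{-\|\eta-\mathbf y\|^2/4t}$-type terms. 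Each such factor is absorbed as $t^{-1/2}$ at the cost of slightly worsening the Gaussian constant $c$. The resulting integral is then comparable to $h_{t'}$ with $t'\sim t$, and the $0$-th order bound applies to it.

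\emph{Main obstacle.} The hard part is keeping the polynomial factor $(1+\|\mathbf x-\mathbf y\|/\sqrt t)^{-2}$, which measures the \emph{Euclidean} distance, through Steps 2 and 3, since both steps naturally lose only Gaussian decay in the orbit distance $d$. My first attempt would be to work in the region $\|\mathbf x-\mathbf y\|>\sqrt t$, where this factor actually matters. There I would write $\partial_{\mathbf y}h_t$ via the identity $T_{\xi,\mathbf y}h_t(\mathbf x,\mathbf y)=-T_{\xi,\mathbf x}h_t(\mathbf x,\mathbf y)$, so that $\partial_{\mathbf y}$ equals a Dunkl operator plus the difference terms in \eqref{eq:T_xi}. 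The Dunkl-operator part is handled by Step 3. In the difference terms
\[
\frac{h_t(\mathbf x,\mathbf y)-h_t(\mathbf x,\sigma_\alpha\mathbf y)}{\langle\alpha,\mathbf y\rangle},
\]
each summand already carries the polynomial factor, since $\|\mathbf x-\sigma_\alpha\mathbf y\|\geq\dots$ is comparable by reflection geometry. Iterating this gives higher orders, and the mean value theorem controls the denominators when $|\langle\alpha,\mathbf y\rangle|<\sqrt t$.
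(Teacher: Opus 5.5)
The paper gives no proof of this statement; it quotes it from \cite{DH-atom} (see also \cite{ADzH}). As a reduction of the general case to the case $m=0$, $\boldsymbol\alpha=\boldsymbol\beta=\mathbf 0$, your Steps 1--3 are sound, and they are simpler than you make them. Differentiating R\"osler's formula gives the pointwise domination
\[
|\partial^{\boldsymbol\beta}_{\mathbf y}h_t(\mathbf x,\mathbf y)|\leq C_{\boldsymbol\beta}\,t^{-|\boldsymbol\beta|/2}\,h_{2t}(\mathbf x,\mathbf y).
\]
This holds because $A(\mathbf x,\mathbf y,\eta)^2=\|\mathbf y-\eta\|^2+\|\mathbf x\|^2-\|\eta\|^2$ with $\|\eta\|\leq\|\mathbf x\|$ on the support of $\mu_{\mathbf x}$, which is what the end of your Step 3 already says. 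For time derivatives, the explicit formula together with $|E(\mathbf x,\mathbf w)|\leq E(\mathbf x,\operatorname{Re}\mathbf w)$ gives $|h_\zeta(\mathbf x,\mathbf y)|\leq(\cos\theta)^{-\mathbf N/2}h_{|\zeta|/\cos\theta}(\mathbf x,\mathbf y)$ for $\zeta=|\zeta|e^{i\theta}$, $|\theta|<\pi/2$. Cauchy's formula on $|\zeta-t|=t/2$ then handles $\partial_t^m$, and no Davies--Phragm\'en--Lindel\"of argument is needed.

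Both are dominations by the positive kernel at a comparable time. So any upper bound you assume for $h_t$ that is stable under $t\mapsto ct$, Euclidean factor included, passes to all derivatives. The splitting $\Delta_k^mH_t=H_{t/3}(\Delta_k^mH_{t/3})H_{t/3}$ and your convolution inequality then give the mixed case. Hence the ``main obstacle'' does not exist, and you should drop the workaround you sketch for it. Its key claim is false: $\|\mathbf x-\sigma_\alpha(\mathbf y)\|$ is not comparable to $\|\mathbf x-\mathbf y\|$ (take $\mathbf x=\sigma_\alpha(\mathbf y)$ with $|\langle\alpha,\mathbf y\rangle|\gg\sqrt t$). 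In that regime, $T_{\xi,\mathbf y}h_t=\frac{\langle\xi,\mathbf x-\mathbf y\rangle}{2t}h_t$ and the difference quotient containing $h_t(\mathbf x,\sigma_\alpha(\mathbf y))/\langle\alpha,\mathbf y\rangle$ are each, in general, only $O\big(t^{-1/2}(1+\|\mathbf x-\mathbf y\|/\sqrt t)^{-1}\mathcal G_{t/c}(\mathbf x,\mathbf y)\big)$. That is one power short; the two-power bound for $\partial_{\mathbf y}h_t$ comes from cancellation between them.

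The real weakness is that the case you assume, $h_t\leq C(1+\|\mathbf x-\mathbf y\|/\sqrt t)^{-2}\mathcal G_{t/c}$, is the substantive content of the statement; the rest is bookkeeping. Importing it from the much finer two-sided bounds of \cite{DH-heat} proves the theorem only relative to that work, and you would have to check that its proof does not itself rely on the present estimate. The missing idea is that the Euclidean factor can be extracted from the plain Gaussian bound by an exact identity. Since $e^{-\|\mathbf y\|^2/4t}$ is $G$-invariant and $T_{j,\mathbf y}E(\mathbf x,\mathbf y)=x_jE(\mathbf x,\mathbf y)$, one has $T_{j,\mathbf y}h_t(\mathbf x,\mathbf y)=\frac{x_j-y_j}{2t}h_t(\mathbf x,\mathbf y)$. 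Apply $T_{j,\mathbf y}$ once more, using $T_j(\ell g)=\ell\,T_jg-g-\sum_{\alpha\in\mathcal R}\frac{k(\alpha)}{2}\langle\alpha,e_j\rangle^2\,g\circ\sigma_\alpha$ for $\ell(\mathbf y)=x_j-y_j$. Then sum over $j$ and use $\Delta_{k,\mathbf y}h_t=\partial_th_t$ to get
\[
\|\mathbf x-\mathbf y\|^2h_t(\mathbf x,\mathbf y)=4t^2\partial_th_t(\mathbf x,\mathbf y)+2Nt\,h_t(\mathbf x,\mathbf y)+2t\sum_{\alpha\in\mathcal R}k(\alpha)\,h_t(\mathbf x,\sigma_\alpha(\mathbf y)).
\]
Now $\mathcal G_{t/c}$ is $G$-invariant in each variable. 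So the plain bound $h_t\leq C\mathcal G_{t/c}$, together with $t|\partial_th_t|\leq C\mathcal G_{t/c}$ (from your complex-time domination), yields $\|\mathbf x-\mathbf y\|^2h_t\leq Ct\,\mathcal G_{t/c}$. This is exactly your base case, and it also explains why the exponent is $2$.
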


{
\subsection{Operator \texorpdfstring{$t^2\Delta_kH_{t^2}$}{Qt}}\label{sec:Qt}

For $t>0$, set $ Q_t=t^2\Delta_kH_{t^2}$. The integral kernel of $Q_t$ is
\begin{equation}\label{eq:Qt_kernel}
    Q_t(\mathbf x,\mathbf y)
    =
    t^2\Delta_{k,\mathbf x}h_{t^2}(\mathbf x,\mathbf y)
    =
    t^2\Delta_{k,\mathbf y}h_{t^2}(\mathbf x,\mathbf y).
\end{equation}
Thus, for $f\in L^2(dw)$,
\[
    Q_tf(\mathbf x)
    =
    \int_{\mathbb R^N}
    Q_t(\mathbf x,\mathbf y)f(\mathbf y)\,dw(\mathbf y).
\]
When no confusion is possible, we also write $Q_t*f=Q_tf$.

The heat-kernel estimates stated below imply that, for every
$\mathbf x\in\mathbb R^N$ and $t>0$, the function
$\mathbf y\mapsto Q_t(\mathbf x,\mathbf y)$ belongs to
$\mathcal S(\mathbb R^N)$. Hence, for $f\in\mathcal S'(\mathbb R^N)$,
we may define
\[
    Q_tf(\mathbf x)
    =
    \langle f,Q_t(\mathbf x,\cdot)\rangle.
\]
  
  Furthermore, from Theorem  \ref{teo:heat_new} we easily conclude that 

\begin{equation}\label{eq:Qt-bound} \begin{split}|\partial_t^m\partial_{\mathbf x}^{\boldsymbol\alpha}\partial_{\mathbf y}^{\boldsymbol\beta}Q_t(\mathbf x,\mathbf y)| & \leq  \frac{C_{m,\boldsymbol\alpha,\boldsymbol\beta}}{w(B(\mathbf x,t+d(\mathbf x,\mathbf y)))}t^{-m-|\boldsymbol\alpha|-|\boldsymbol\beta|}\exp\Big(-c\frac{d(\mathbf x,\mathbf y)^2}{t^2}\Big)\\
&\leq  \frac{C'_{m,\boldsymbol\alpha,\boldsymbol\beta}}{w(B(\mathbf x,t+d(\mathbf x,\mathbf y)))}t^{-m-|\boldsymbol\alpha|-|\boldsymbol\beta|}\exp\Big(-c\frac{d(\mathbf x,\mathbf y)}{t}\Big).\\
\end{split}\end{equation}
 } 

 We say that a tempered distribution $F$ coincides with a $dw(\mathbf x)$-locally integrable function $
f$ if 
$$ \langle F,\varphi\rangle= \int_{\mathbb{R}^N} f(\mathbf x)\varphi(\mathbf x)\, dw(\mathbf x) \quad \text{ for all } \varphi\in\mathcal S(\mathbb{R}^N).$$ 
Then we simply write $F=f$.

\section{Auxiliary estimates}\label{sec:poly}

\subsection{Estimates of polynomials with respect to the Dunkl measure}

\begin{lemma}\label{lem:polynomial_derivatives_sup}
There exists a constant $C_s>0$, depending only on $N$
and $s$, such that for every ball
$B=B(\mathbf x_0,r)$, with $r>0$, and every
$P\in\mathcal P_s$,
\[
    \sum_{m=0}^s\sum_{|\beta|=m}
    r^m\sup_{\mathbf x\in B}
    |\partial^\beta P(\mathbf x)|
    \leq
    C_s\sup_{\mathbf x\in B}|P(\mathbf x)|.
\]
\end{lemma}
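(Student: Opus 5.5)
This is a purely Euclidean statement: the measure $dw$ plays no role. The plan is to reduce to the unit ball by translation and dilation, and then to use that all norms on the finite-dimensional space $\mathcal P_s$ are equivalent.

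\emph{Reduction to the unit ball.} Given $B=B(\mathbf x_0,r)$ and $P\in\mathcal P_s$, set $\tilde P(\mathbf y)=P(\mathbf x_0+r\mathbf y)$. Since translations and dilations preserve total degree, $\tilde P\in\mathcal P_s$. By the chain rule,
\[
\partial^\beta\tilde P(\mathbf y)=r^{|\beta|}(\partial^\beta P)(\mathbf x_0+r\mathbf y).
\]
Hence
\[
\sup_{B(\mathbf 0,1)}|\partial^\beta\tilde P|=r^{|\beta|}\sup_{B}|\partial^\beta P|
\qquad\text{and}\qquad
\sup_{B(\mathbf 0,1)}|\tilde P|=\sup_B|P|.
\]
It therefore suffices to prove the inequality for $B=B(\mathbf 0,1)$ with $r=1$, using a constant depending only on $N$ and $s$.

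\emph{The unit ball.} On the finite-dimensional real vector space $\mathcal P_s$, consider the two functionals
\[
\|P\|_1=\sup_{B(\mathbf 0,1)}|P|
\qquad\text{and}\qquad
\|P\|_2=\sum_{m=0}^s\sum_{|\beta|=m}\sup_{B(\mathbf 0,1)}|\partial^\beta P|.
\]
Both are clearly seminorms. Each is in fact a norm: if $\|P\|_1=0$, then the polynomial $P$ vanishes on a nonempty open set, so $P\equiv0$. The same reasoning applies to $\|\cdot\|_2$, since that sum contains the term with $\beta=0$.

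All norms on a finite-dimensional space are equivalent, so there is $C_s$ with $\|P\|_2\le C_s\|P\|_1$. The dimension of $\mathcal P_s$ and the space itself depend only on $N$ and $s$, so $C_s$ does as well. If an explicit constant is preferred, one can instead write $P$ in the monomial basis. The coefficients are linear functionals of $P$ and are bounded by $\sup_{B(\mathbf 0,1)}|P|$, for instance via Lagrange interpolation at a unisolvent set of points in the ball, or via Markov's inequality. Each $\sup_{B(\mathbf 0,1)}|\partial^\beta P|$ is in turn bounded by a sum of the coefficients.

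No genuine obstacle is expected. The only points to verify are that the rescaling keeps $P$ inside $\mathcal P_s$ and produces exactly the factors $r^{m}$, and that the sup over the open ball defines a norm on $\mathcal P_s$ (a nonzero polynomial cannot vanish on an open set).
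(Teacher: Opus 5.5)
Your proposal is correct and follows the paper's proof essentially exactly: equivalence of the two norms on the finite-dimensional space $\mathcal P_s$ over $B(\mathbf 0,1)$, followed by the rescaling $\widetilde P(\mathbf u)=P(\mathbf x_0+r\mathbf u)$, which produces the factors $r^{|\beta|}$. You also check explicitly that $\sup_{B(\mathbf 0,1)}|P|$ is a genuine norm, a point the paper leaves implicit; this is a harmless addition.
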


\begin{proof}
On the finite-dimensional space $\mathcal P_s$, the expressions
\[
    \sup_{\mathbf x\in B(0,1)}|P(\mathbf x)|
    \quad\text{and}\quad
    \sum_{m=0}^s\sum_{|\beta|=m}
    \sup_{\mathbf x\in B(0,1)}
    |\partial^\beta P(\mathbf x)|
\]
define equivalent norms. This proves the assertion for
$B=B(0,1)$.

For a general ball $B=B(\mathbf x_0,r)$, apply this estimate
to the polynomial
\[
    \widetilde P(\mathbf u)=P(\mathbf x_0+r\mathbf u).
\]
Since
\[
    \partial^\beta\widetilde P(\mathbf u)
    =
    r^{|\beta|}
    (\partial^\beta P)(\mathbf x_0+r\mathbf u),
\]
the desired inequality follows.
\end{proof}

\begin{lemma}\label{lem:polynomial_sup_L2}
There exists a constant $C>0$, depending only on $N$, $s$,
and the doubling constant of $dw$, such that for every
ball $B=B(\mathbf x_0,r)$, with $r>0$, and every
$P\in\mathcal P_s$,
\[
    \sup_{\mathbf x\in B}|P(\mathbf x)|
    \leq
    \frac{C}{w(B)^{1/2}}
    \|P\|_{L^2(B,dw)}.
\]
\end{lemma}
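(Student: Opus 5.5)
The plan is to reduce to a fixed reference configuration by scaling and then use finite-dimensionality together with a compactness argument. The one feature that separates this from the Euclidean case is that the weight $w$ in \eqref{eq:measure} is not translation invariant. So the reference configuration cannot simply be $B(0,1)$. Instead, the normalized ball will range over a compact family of positions.

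\textbf{Normalization.} Given $B=B(\mathbf x_0,r)$ and $P\in\mathcal P_s$, set $\widetilde P(\mathbf u)=P(\mathbf x_0+r\mathbf u)$ and $\mathbf y_0=\mathbf x_0/r$. Then $\widetilde P\in\mathcal P_s$. Since $w$ is homogeneous of degree $\mathbf N-N$, we have
\[
w(\mathbf x_0+r\mathbf u)=r^{\mathbf N-N}w(\mathbf y_0+\mathbf u),
\]
and therefore
\[
\|P\|_{L^2(B,dw)}^2=r^{\mathbf N}\int_{B(0,1)}|\widetilde P(\mathbf u)|^2w(\mathbf y_0+\mathbf u)\,d\mathbf u .
\]
By \eqref{eq:t_ball}, $w(B)=r^{\mathbf N}w(B(\mathbf y_0,1))$. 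The powers of $r$ cancel, and the claim reduces to the following statement for unit balls with arbitrary centre $\mathbf y$:
\[
\sup_{B(0,1)}|Q|^2\le C\,\frac{1}{w(B(\mathbf y,1))}\int_{B(0,1)}|Q(\mathbf u)|^2\,w(\mathbf y+\mathbf u)\,d\mathbf u,\qquad Q\in\mathcal P_s.
\]
This must hold uniformly in $\mathbf y\in\mathbb R^N$.

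\textbf{Uniformity in the centre.} Define $\omega_{\mathbf y}(\mathbf u)=w(\mathbf y+\mathbf u)/w(B(\mathbf y,1))$ on $B(0,1)$. These are probability densities. I would use \eqref{eq:balls_asymp} to show they do not degenerate. The plan is to find a fixed set $E_{\mathbf y}\subset B(0,1)$ with Lebesgue measure at least $c>0$ on which $\omega_{\mathbf y}\ge c$, where $c$ is independent of $\mathbf y$.

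To construct it, remove from $B(0,1)$ the slabs $\{\mathbf u:|\langle \mathbf y+\mathbf u,\alpha\rangle|<\varepsilon\}$ for $\alpha\in\mathcal R$. There are finitely many slabs, each of Lebesgue measure $\le C\varepsilon$, so for small $\varepsilon$ the remainder $E_{\mathbf y}$ has measure at least $|B(0,1)|/2$. On $E_{\mathbf y}$ we have $|\langle\mathbf y+\mathbf u,\alpha\rangle|\ge \varepsilon$. We also have $|\langle \mathbf y,\alpha\rangle|+1\le C(|\langle\mathbf y+\mathbf u,\alpha\rangle|+1)$. Hence $|\langle\mathbf y+\mathbf u,\alpha\rangle|\ge c_\varepsilon(|\langle\mathbf y,\alpha\rangle|+1)$, and multiplying over $\alpha$ and applying \eqref{eq:balls_asymp} gives $\omega_{\mathbf y}\ge c$ on $E_{\mathbf y}$.

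\textbf{Key step: polynomial estimate on sets of positive measure.} It remains to prove that for every measurable $E\subset B(0,1)$ with $|E|\ge c$,
\[
\sup_{B(0,1)}|Q|^2\le C\int_E|Q|^2\,d\mathbf u,\qquad Q\in\mathcal P_s .
\]
This is the main obstacle, because $E$ varies and is not compact in any obvious sense. I would handle it by contradiction. Suppose $Q_n$ satisfy $\sup_{B(0,1)}|Q_n|=1$ and $\int_{E_n}|Q_n|^2\to0$. By finite-dimensionality, pass to a subsequence with $Q_n\to Q$ uniformly on $B(0,1)$, so that $\sup_{B(0,1)}|Q|=1$ and $\int_{E_n}|Q|^2\to0$. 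A nonzero polynomial vanishes only on a null set, so $|\{|Q|<\delta\}\cap B(0,1)|\to0$ as $\delta\to0$. For small $\delta$ this measure is below $c/2$, which forces $\int_{E_n}|Q|^2\ge \delta^2 c/2$, a contradiction.

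Alternatively, one could cite the Remez inequality directly. Combining the key step with the lower bound $\omega_{\mathbf y}\ge c$ on $E_{\mathbf y}$ completes the proof.
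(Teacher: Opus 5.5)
Your argument is correct, but it takes a different route from the paper's.

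\textbf{The paper's route.} The paper never normalizes the measure. It takes a point $\mathbf z\in\overline B$ where $|P|$ attains $V=\sup_{\overline B}|P|$. It then uses Lemma~\ref{lem:polynomial_derivatives_sup}, a scaled Markov inequality, to get $\|\nabla P\|\leq C_sV/r$ on $B$, so $|P|\geq V/2$ on $E=B\cap B(\mathbf z,r/(2C_s))$. Since $E$ contains a ball of radius comparable to $r$, doubling gives $w(E)\geq c\,w(B)$, and then $V^2w(E)/4\leq\|P\|_{L^2(B,dw)}^2$. So the non-translation-invariance of $w$, which drove your detour, is handled simply by localizing near the maximum and applying doubling there.

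\textbf{Your route.} You rescale using the homogeneity of $w$. You then use the product formula \eqref{eq:measure} and \eqref{eq:balls_asymp} to show that $\omega_{\mathbf y}$ is bounded below on a subset of $B(0,1)$ of Lebesgue measure at least $|B(0,1)|/2$. Finally you prove a Remez-type inequality by compactness. Each step checks out:
\begin{itemize}
\item the slab measure bound is uniform in the offset $\langle\mathbf y,\alpha\rangle$;
\item the passage from $|\langle\mathbf y+\mathbf u,\alpha\rangle|\geq\varepsilon$ to $|\langle\mathbf y+\mathbf u,\alpha\rangle|\geq c_\varepsilon(|\langle\mathbf y,\alpha\rangle|+1)$ is valid, and raising to the powers $k(\alpha)\geq0$ preserves it;
\item in the contradiction argument, uniform convergence together with the fact that nonzero polynomials vanish only on null sets closes the loop.
\end{itemize}

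\textbf{What your route costs.} It uses the explicit structure of $w$, so your constant depends on $\mathcal R$ and $k$: through $|\mathcal R|$, the choice of $\varepsilon$, and the constants in \eqref{eq:balls_asymp}. The lemma, by contrast, asserts dependence only on $N$, $s$ and the doubling constant. The compactness step also gives no explicit constant. Since $\mathcal R$ and $k$ are fixed throughout the paper, this is harmless for every later use of the lemma. Still, the paper's argument is shorter, yields the stated dependence, and works verbatim for any doubling measure on $\mathbb R^N$.

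\textbf{What your route buys.} You get a somewhat stronger intermediate fact: $\sup_B|P|$ is controlled by the $L^2$ norm of $P$ over any subset of $B$ of proportional Lebesgue measure. The paper does not need this.
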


\begin{proof}
The assertion is immediate for constant polynomials.
We may therefore assume that $s\geq1$ and $\nabla P\neq0$.
Set
\[
    V=\sup_{\mathbf x\in \overline{B}}|P(\mathbf x)|.
\]
There exists $\mathbf y_0\in \overline{B}$
such that $|P(\mathbf y_0)|=V$. Lemma~\ref{lem:polynomial_derivatives_sup} gives a constant
$C_s\geq1$ such that
\[
    \sup_{\mathbf x\in B}\|\nabla P(\mathbf x)\|
    \leq C_s r^{-1}V.
\]
For every $\mathbf x\in B$, the segment joining
$\mathbf y_0$ to $\mathbf x$ is contained in $ B$.
Consequently,
\begin{align*}
    |P(\mathbf x)|
    &\geq
    |P(\mathbf y_0)|-|P(\mathbf x)-P(\mathbf y_0)|\geq
    V-C_s r^{-1}V\|\mathbf x-\mathbf y_0\|.
\end{align*}
Set
\[
    \delta=\frac{1}{2C_s},
    \qquad
    E=B\cap B(\mathbf y_0,\delta r).
\]
Then
\[
    |P(\mathbf x)|\geq\frac{V}{2}
    \qquad\text{for every }\mathbf x\in E.
\]

By the doubling property, $w(E)\geq c\,w(B)$, with a constant
independent of $B$ and $P \in \mathcal{P}_s$. Hence,
\[
    \frac{V^2}{4}w(E)
    \leq
    \int_E|P(\mathbf x)|^2\,dw(\mathbf x)
    \leq
    \|P\|_{V^2(B,dw)}^2.
\]
Combining this with $w(E)\geq C_\delta^{-1}w(B)$
proves the assertion.
\end{proof}

\begin{corollary}\label{coro:polynomial_derivatives_L2}
There exists a constant $C>0$, depending only on $N$, $s$,
and the doubling constant of $dw$, such that for every
ball $B=B(\mathbf x_0,r)$, with $r>0$, and every
$P\in\mathcal P_s$,
\[
    \sum_{m=0}^s\sum_{|\beta|=m}
    r^m\sup_{\mathbf x\in B}
    |\partial^\beta P(\mathbf x)|
    \leq
    \frac{C}{w(B)^{1/2}}
    \|P\|_{L^2(B,dw)}.
\]
In particular, for every multi-index $\beta$ with
$|\beta|\leq s$,
\[
    \sup_{\mathbf x\in B}
    |\partial^\beta P(\mathbf x)|
    \leq
    \frac{C\,r^{-|\beta|}}{w(B)^{1/2}}
    \|P\|_{L^2(B,dw)}.
\]
\end{corollary}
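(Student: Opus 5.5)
This follows directly by chaining Lemma~\ref{lem:polynomial_derivatives_sup} with Lemma~\ref{lem:polynomial_sup_L2}. First apply Lemma~\ref{lem:polynomial_derivatives_sup} to $P\in\mathcal P_s$ on $B=B(\mathbf x_0,r)$. This bounds the weighted sum $\sum_{m=0}^s\sum_{|\beta|=m} r^m\sup_B|\partial^\beta P|$ by $C_s\sup_B|P|$, with $C_s$ depending only on $N$ and $s$. Next, Lemma~\ref{lem:polynomial_sup_L2} bounds $\sup_B|P|$ by $C\,w(B)^{-1/2}\|P\|_{L^2(B,dw)}$, with $C$ depending only on $N$, $s$ and the doubling constant of $dw$. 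Multiplying the two constants gives the first displayed inequality, and the constant depends only on the advertised quantities.

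For the second assertion, fix a multi-index $\beta$ with $|\beta|\le s$. The term $r^{|\beta|}\sup_B|\partial^\beta P|$ is one of the nonnegative summands on the left-hand side. It is therefore bounded by the whole sum, hence by $C\,w(B)^{-1/2}\|P\|_{L^2(B,dw)}$. Dividing by $r^{|\beta|}>0$ gives the claimed pointwise derivative bound.

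There is no real obstacle here; the only thing to check is that the constants are uniform in the ball and in $P$. This holds because both lemmas are already stated with constants independent of $B$ and $P$. Their uniformity comes from scaling invariance (Lemma~\ref{lem:polynomial_derivatives_sup}) and from the doubling property \eqref{eq:balls_asymp} (Lemma~\ref{lem:polynomial_sup_L2}).
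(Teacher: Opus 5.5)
Your proposal is correct and follows the paper's proof, which simply combines Lemma~\ref{lem:polynomial_derivatives_sup} and Lemma~\ref{lem:polynomial_sup_L2}. Your extraction of the second assertion, by isolating one nonnegative summand and dividing by $r^{|\beta|}$, makes explicit what the paper leaves implicit.
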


\begin{proof}
Combine Lemmas~\ref{lem:polynomial_derivatives_sup}
and~\ref{lem:polynomial_sup_L2}.
\end{proof}

\subsection{Estimates of moments for truncated atoms}
Fix a function $\rho\in C_c^\infty(\mathbb R^N)$ such that
\[
    0\leq\rho\leq1,
    \qquad
    \rho=1\text{ on }B(0,1/4),
    \qquad
    \supp\rho\subseteq B(0,1/2).
\]
Fix positive integers $J,L$. 
Let $\mathcal Z$ be a nonempty finite set contained in
$B(\mathbf x_0,LR)$, where $R>0$ such that 
that $\#\mathcal Z\leq J$.
Define
\begin{equation}\label{eq:chi}
    \chi_{\mathbf x_0,R}(\mathbf x)
    =
    1-\prod_{\mathbf z\in\mathcal Z}
    \left(
        1-\rho\left(\frac{\mathbf x-\mathbf z}{R}\right)
    \right).
\end{equation}
Set
\[
    U=B(\mathbf x_0,(L+1)R).
\]
Then $\chi_{\mathbf x_0,R}\in C_c^\infty(U)$, $0\leq \chi_{\mathbf x_0,R}\leq 1$, $\chi_{\mathbf x_0,R} \equiv 0$ on $U^c$, and, for every
multi-index $\alpha$,
\begin{equation}\label{eq:cutoff_derivatives}
    \|\partial^\alpha\chi_{\mathbf x_0,R}\|_\infty
    \leq C_\alpha R^{-|\alpha|},
\end{equation}
where $C_\alpha$ depends only on $\alpha$, $J$, and $\rho$.
Moreover, $\chi_{\mathbf x_0,R}=1$ on
$\bigcup_{\mathbf z\in\mathcal Z}B(\mathbf z,R/4)$.

\begin{lemma}\label{lem:localized_polynomial_moments}
Let $M$ be a positive integer and let $s$ be a nonnegative
integer. Let $\chi_{\mathbf x_0,R}$ be the cutoff defined
above, and set
\[
    U=B(\mathbf x_0,(L+1)R).
\]
Suppose that $F\in\mathcal D(\Delta_k^M)$ vanishes
$dw$-almost everywhere outside $\mathcal O(B_*)$,
where $B_*=B(\mathbf y_0,r)$ and $r>0$.
Then, for every $P\in\mathcal P_s$,
\[
    \left|
        \int_{\mathbb R^N}
        \Delta_k^M F(\mathbf x)P(\mathbf x)
        \chi_{\mathbf x_0,R}(\mathbf x)\,dw(\mathbf x)
    \right|
    \leq
    C R^{-2M}
    \left(\frac{w(B_*)}{w(U)}\right)^{1/2}
    \|F\|_{L^2(dw)}\|P\|_{L^2(U,dw)}.
\]
The constant $C$ depends only on $M$, $s$, $L$, $J$, $\rho$,
and the fixed Dunkl structure.
\end{lemma}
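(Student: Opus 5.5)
The plan is to move all $2M$ derivatives off $F$ and onto the smooth compactly supported function $\Phi=P\chi_{\mathbf x_0,R}$, using the symmetry of $\Delta_k$. Since $\Phi\in C_c^\infty(\mathbb R^N)\subseteq\mathcal S(\mathbb R^N)$ and $F\in\mathcal D(\Delta_k^M)$, self-adjointness of $\Delta_k$ (it is the closure of $\Delta_k|_{\mathcal S}$) gives
\[
\int \Delta_k^M F\,\Phi\,dw=\int F\,\Delta_k^M\Phi\,dw .
\]
Here $\Delta_k^M\Phi$ is a finite sum of terms $T^\beta\Phi$ with $|\beta|=2M$. Because $F$ vanishes outside $\mathcal O(B_*)$, Cauchy--Schwarz bounds the right-hand side by
\[
\|F\|_{L^2(dw)}\,w(\mathcal O(B_*))^{1/2}\,\|\Delta_k^M\Phi\|_{L^\infty}\le C\|F\|_{L^2(dw)}\,w(B_*)^{1/2}\,\|\Delta_k^M\Phi\|_{L^\infty}.
\]
The last step uses $G$-invariance of $dw$, which gives $w(\mathcal O(B_*))\le |G|\,w(B_*)$.

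Next I bound $\|\Delta_k^M\Phi\|_{L^\infty}$. By \eqref{eq:small_x}, it is at most $C\sum_{|\beta'|=2M}\|\partial^{\beta'}\Phi\|_{L^\infty}$. This is the crucial point: the difference parts of the $T_j$ would otherwise spread the support to the orbit of $U$, but \eqref{eq:small_x} controls everything by Euclidean derivatives of $\Phi$, and these are supported in $U$. By Leibniz and \eqref{eq:cutoff_derivatives},
\[
|\partial^{\beta'}\Phi|\le C\sum_{\gamma\le\beta'}R^{-(2M-|\gamma|)}\sup_U|\partial^\gamma P|,
\]
and only $|\gamma|\le s$ contribute. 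Corollary~\ref{coro:polynomial_derivatives_L2}, applied on the ball $U$ of radius $(L+1)R$, gives
\[
\sup_U|\partial^\gamma P|\le C((L+1)R)^{-|\gamma|}\,w(U)^{-1/2}\,\|P\|_{L^2(U,dw)}.
\]
Each term is therefore $\le C R^{-2M}w(U)^{-1/2}\|P\|_{L^2(U,dw)}$, with $C$ depending on $L,J,\rho,M,s$. Combining the two estimates yields the claimed inequality.

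The main obstacle is justifying the integration by parts and the use of \eqref{eq:small_x}. For the first, one uses that $\mathcal S$ is a core for the self-adjoint operator, so $\langle\Delta_k^M F,\Phi\rangle=\langle F,\Delta_k^M\Phi\rangle$ for $\Phi\in\mathcal S$; this follows by iterating $M$ times, each time using $\Delta_k\Phi\in\mathcal S$. For the second, \eqref{eq:small_x} must be applied to the iterated operator $\Delta_k^M=\sum c_\beta T^\beta$, where each $T^\beta$ has order $2M$. The $L^\infty$ bound holds globally even though $T^\beta\Phi$ may be nonzero on reflected copies of $U$; this is harmless because the estimate only needs a sup-norm bound.
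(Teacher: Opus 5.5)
Your proposal is correct and follows the paper's proof essentially step for step. Both arguments use symmetry of $\Delta_k$ to move $\Delta_k^M$ onto $P\chi_{\mathbf x_0,R}$, then bound $\|\Delta_k^M(P\chi_{\mathbf x_0,R})\|_\infty$ via \eqref{eq:small_x}, Leibniz, \eqref{eq:cutoff_derivatives} and Corollary~\ref{coro:polynomial_derivatives_L2} on $U$, and finish with Cauchy--Schwarz on $\mathcal O(B_*)$ using $w(\mathcal O(B_*))\leq |G|\,w(B_*)$. Your justification of the $M$-fold integration by parts through the core $\mathcal S(\mathbb R^N)$ is a detail the paper leaves implicit.
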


\begin{proof}
Since $P\chi_{\mathbf x_0,R}\in C_c^\infty(\mathbb R^N)$
and $F\in\mathcal D(\Delta_k^M)$, self-adjointness gives
\[
    \int_{\mathbb R^N}
    \Delta_k^M F\,P\chi_{\mathbf x_0,R}\,dw
    =
    \int_{\mathbb R^N}
    F\,\Delta_k^M(P\chi_{\mathbf x_0,R})\,dw.
\]

The identity $\Delta_k=\sum_{j=1}^N T_j^2$ and the
commutativity of the Dunkl operators imply
$
    \Delta_k^M
    =
    \sum_{{\alpha\in\mathbb N_0^N, |\alpha|=M}}
    \frac{M!}{\alpha!}T^{2\alpha}.
$
Therefore, by~\eqref{eq:small_x},
\[
    \|\Delta_k^M(P\chi_{\mathbf x_0,R})\|_\infty
    \leq
    C\sum_{|\beta|=2M}
    \|\partial^\beta(P\chi_{\mathbf x_0,R})\|_\infty.
\]
Using Leibniz's rule and~\eqref{eq:cutoff_derivatives},
we obtain
\[
    \|\partial^\beta(P\chi_{\mathbf x_0,R})\|_\infty
    \leq
    C\sum_{{\gamma\leq\beta, |\gamma|\leq s}}
    R^{-(2M-|\gamma|)}
    \sup_{\mathbf x\in U}|\partial^\gamma P(\mathbf x)|,
    \qquad |\beta|=2M.
\]
By Corollary~\ref{coro:polynomial_derivatives_L2},
\[
    \sup_{\mathbf x\in U}|\partial^\gamma P(\mathbf x)|
    \leq
    \frac{C R^{-|\gamma|}}{w(U)^{1/2}}
    \|P\|_{L^2(U,dw)}.
\]
Consequently,
\[
    \|\Delta_k^M(P\chi_{\mathbf x_0,R})\|_\infty
    \leq
    \frac{C ((L+1)R)^{-2M}}{w(U)^{1/2}}
    \|P\|_{L^2(U,dw)}.
\]

Finally, the support assumption on $F$ and the
Cauchy--Schwarz inequality yield
\begin{align*}
    \left|
        \int_{\mathbb R^N}
        \Delta_k^M F\,P\chi_{\mathbf x_0,R}\,dw
    \right|
    &\leq
    \|\Delta_k^M(P\chi_{\mathbf x_0,R})\|_\infty
    \int_{\mathcal O(B_*)}|F|\,dw\\
    &\leq
    \frac{C ((L+1)R)^{-2M}}{w(U)^{1/2}}
    \|P\|_{L^2(U,dw)}
    w(\mathcal O(B_*))^{1/2}\|F\|_{L^2(dw)}\\
    &\leq
    C' R^{-2M}
    \left(\frac{w(B_*)}{w(U)}\right)^{1/2}
    \|F\|_{L^2(dw)}\|P\|_{L^2(U,dw)},
\end{align*}
where the last inequality follows from
$w(\mathcal O(B_*))\leq |G|w(B_*)$.
\end{proof}

\section{Operator atoms and localized polynomial moments}
\label{subsec:operator_atoms}

We recall the atomic characterization of
$H^p_{\mathrm{Dunkl}}$ established in~\cite{DzHL_atom}.
The atoms used in that characterization have cancellation
expressed through powers of the Dunkl Laplacian.
We shall use this structure to estimate their localized
polynomial moments.

For a positive integer $M$ we denote the domain of the operator $\Delta_k$ by

\begin{equation}\label{eq:domain}
    \mathcal{D}(\Delta_k^M)=\{f\in L^2(dw): \int_{\mathbb{R}^N} |\mathcal Ff(\xi)|^2\|\xi\|^{4M}\, dw(\xi)<\infty\}.
\end{equation}

\begin{definition}[{\cite[Definition~4.1]{DzHL_atom}}]
\label{def:operator_atom}
Let $0<p\leq1$ and let $M$ be a positive integer such that
\[
    M>\frac{\mathbf N(2-p)}{4p}.
\]
A function $\boldsymbol a$ is called a
$(p,2,M,\Delta_k)$-atom if there exist
$\boldsymbol b\in\mathcal D(\Delta_k^M)$ and a Euclidean
ball $B=B(\mathbf y_0,r)$, with $r>0$, such that
\[
    \boldsymbol a=\Delta_k^M\boldsymbol b,
\]
$\boldsymbol b$ vanishes $dw$-almost everywhere outside
$\mathcal O(B)$, and
\begin{equation}\label{eq:operator_atom_size}
    \|(r^2\Delta_k)^m\boldsymbol b\|_{L^2(dw)}
    \leq
    r^{2M}w(B)^{1/2-1/p},
    \qquad m=0,1,\ldots,M.
\end{equation}
We refer to these functions as operator atoms.
Each operator atom is identified with the tempered
distribution given by integration against $dw$.
\end{definition}

The following theorem summarizes the results of
\cite[Proposition~4.8, Theorem~4.10,
and Proposition~4.11]{DzHL_atom}.

\begin{theorem}\label{thm:operator_atomic_characterization}
Let $0<p\leq1$ and let $M$ be a positive integer satisfying
\[
    M>\frac{\mathbf N(2-p)}{4p}.
\]
Under the natural identification with tempered distributions,
a distribution $f\in\mathcal S'(\mathbb R^N)$ belongs to
$H^p_{\mathrm{Dunkl}}$ if and only if it admits
a representation
\[
    f=\sum_{j=1}^{\infty}\lambda_j\boldsymbol a_j
    \quad\text{in }\mathcal S'(\mathbb R^N),
\]
where the $\boldsymbol a_j$ are
$(p,2,M,\Delta_k)$-atoms and
$(\lambda_j)_{j\geq1}\in\ell^p(\mathbb C).
$
Moreover,
\[
    \|f\|_{H^p_{\mathrm{Dunkl}}}
    \asymp
    \inf
    \left(\sum_{j=1}^{\infty}|\lambda_j|^p\right)^{1/p},
\]
where the infimum is taken over all such representations.

Every series of operator atoms with coefficients in
$\ell^p$ converges unconditionally in
$\mathcal S'(\mathbb R^N)$ and converges in
$H^p_{\mathrm{Dunkl}}$.
If $f\in\mathbb H^p_{\mathrm{Dunkl}}$, the representation
can be chosen to converge also in $L^2(dw)$, with
\[
    \sum_{j=1}^{\infty}|\lambda_j|^p
    \leq C\|f\|_{\mathbb H^p_{\mathrm{Dunkl}}}^p.
\]
\end{theorem}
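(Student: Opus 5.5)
The statement is a consolidation of results already established in \cite{DzHL_atom}, so the plan is mainly to match each assertion with its source and to check that the hypotheses and normalizations agree with those used here. The identification of elements of $H^p_{\mathrm{Dunkl}}$ (abstract completions) with tempered distributions is \cite[Proposition~4.11]{DzHL_atom}. The decomposition $f=\sum_j\lambda_j\boldsymbol a_j$ with $\sum_j|\lambda_j|^p\leq C\|f\|^p_{H^p_{\mathrm{Dunkl}}}$ is \cite[Theorem~4.10]{DzHL_atom}. The converse bound, together with convergence of atomic series, is \cite[Proposition~4.8]{DzHL_atom}. Combining the two inequalities gives the quasi-norm equivalence $\|f\|_{H^p_{\mathrm{Dunkl}}}\asymp\inf(\sum|\lambda_j|^p)^{1/p}$.

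For completeness, I would recall how each ingredient is obtained.

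\emph{Converse direction.} One shows the uniform estimate $\|S\boldsymbol a\|_{L^p(dw)}\leq C$ for every $(p,2,M,\Delta_k)$-atom. Near the orbit $\mathcal O(4B)$ this follows from $L^2$-boundedness of $S$, H\"older's inequality, and $w(\mathcal O(4B))\leq |G|\,w(4B)$. Away from the orbit one writes
\[
Q_t\boldsymbol a=t^{2}\cdot t^{-2M}(t^2\Delta_k)^{M}H_{t^2}\Delta_k\boldsymbol b
\]
and uses the kernel bounds \eqref{eq:Qt-bound}, which decay in $d(\mathbf x,\mathbf y)$. For $t\geq r$ one gains the factor $(r/t)^{2M}$, and this is summable in $L^p$ exactly when $M>\mathbf N(2-p)/4p$.

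\emph{Decomposition direction.} For $f\in\mathbb H^p_{\mathrm{Dunkl}}$ one uses the Calder\'on reproducing formula
\[
f=c\int_0^\infty (t^2\Delta_k)^{M+1}H_{t^2}\,Q_tf\,\frac{dt}{t}
\]
in $L^2(dw)$. The function $(\mathbf y,t)\mapsto Q_tf(\mathbf y)$ belongs to the tent space $T^p_2$ because $Sf\in L^p$, so one applies the Coifman--Meyer--Stein tent space atomic decomposition on the doubling space $(\mathbb R^N,\|\cdot\|,dw)$. Each tent atom $A_j$, supported in $\widehat{B_j}$, is mapped to $\boldsymbol b_j=\int t^{2M}(t^2\Delta_k)H_{t^2}A_j\,dt/t$. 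This $\boldsymbol b_j$ is not compactly supported but only has Gaussian decay away from $\mathcal O(B_j)$, since the heat kernel is controlled by $d$, not by the Euclidean distance.

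\textbf{Main obstacle.} The delicate step, which the cited work handles and which I would only quote, is turning these heat-propagated pieces into functions exactly supported in $\mathcal O(B)$ while preserving the size estimates \eqref{eq:operator_atom_size}. I would use finite propagation speed for $\cos(t\sqrt{-\Delta_k})$: the wave propagator moves supports only within $d$-neighbourhoods, i.e.\ within orbits of enlarged balls. Accordingly, I would replace $H_{t^2}$ by a compactly supported wave-type functional calculus $\Phi(t\sqrt{-\Delta_k})$ with even $\widehat\Phi$ supported in $[-1,1]$. This is why supports are $G$-orbits rather than balls.

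Finally, $L^2$-convergence for $f\in\mathbb H^p_{\mathrm{Dunkl}}$ follows from $L^2$-convergence of the tent decomposition. Unconditional convergence in $\mathcal S'$ follows from a uniform bound $|\langle\boldsymbol a,\varphi\rangle|\leq C_\varphi$ over all atoms, obtained by pairing $\boldsymbol b$ with $\Delta_k^M\varphi$ when $r\geq1$ and using cancellation when $r<1$, together with $\sum|\lambda_j|\leq(\sum|\lambda_j|^p)^{1/p}$.
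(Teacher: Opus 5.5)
Your approach matches the paper's. The paper gives no proof of this theorem; it quotes it as a summary of \cite[Proposition~4.8, Theorem~4.10, Proposition~4.11]{DzHL_atom}, so reducing the statement to those results is exactly what the paper does.

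Two steps in your ``for completeness'' sketch do not work as written.

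\emph{The uniform bound on $\langle\boldsymbol a,\varphi\rangle$.} Your two cases are reversed. For $r\geq1$, pairing $\boldsymbol b$ with $\Delta_k^M\varphi$ and using $w(B)\gtrsim r^{\mathbf N}$ gives only
\[
|\langle\boldsymbol a,\varphi\rangle|\leq\|\boldsymbol b\|_{L^2(dw)}\|\Delta_k^M\varphi\|_{L^2(dw)}\lesssim r^{2M-\mathbf N(1/p-1/2)}\|\Delta_k^M\varphi\|_{L^2(dw)}.
\]
The hypothesis on $M$ makes this exponent positive, so the estimate is not uniform as $r\to\infty$. For $r\geq1$ you should instead use the size condition:
\[
|\langle\boldsymbol a,\varphi\rangle|\leq\|\boldsymbol a\|_{L^1(dw)}\|\varphi\|_\infty\leq|G|^{1/2}w(B)^{1-1/p}\|\varphi\|_\infty\lesssim\|\varphi\|_\infty,
\]
which holds because $w(B)\gtrsim r^{\mathbf N}\geq1$. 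The pairing with $\Delta_k^M\varphi$ belongs to the case $r<1$. There $\|\boldsymbol b\|_{L^1(dw)}\lesssim r^{2M}w(B)^{1-1/p}\lesssim r^{2M-\mathbf N(1/p-1)}$, which is bounded since $2M>\mathbf N(1/p-1)$.

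\emph{The factorization in the converse direction.} You write $Q_t\boldsymbol a=t^{2-2M}(t^2\Delta_k)^MH_{t^2}(\Delta_k\boldsymbol b)$. Combined with the size bound on $\Delta_k\boldsymbol b$, this gains only $(r/t)^{2M-2}$. To obtain the factor $(r/t)^{2M}$ you claim, write
\[
Q_t\boldsymbol a=t^{-2M}(t^2\Delta_k)^{M+1}H_{t^2}\boldsymbol b
\]
and use $\|\boldsymbol b\|_{L^2(dw)}\leq r^{2M}w(B)^{1/2-1/p}$.
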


Recall that
\[
    s_p=\left\lfloor
        \mathbf N\left(\frac1p-1\right)
    \right\rfloor.
\]
The assumption on $M$ implies
\begin{equation}\label{eq:operator_atom_order}
    2M>
    \mathbf N\left(\frac1p-\frac12\right)
    =
    \mathbf N\left(\frac1p-1\right)+\frac{\mathbf N}{2}
    >s_p.
\end{equation}
Thus the operator cancellation is strong enough to imply
vanishing polynomial moments up to degree $s_p$, which is stated in the lemma below.

\begin{lemma}\label{lem:operator_atom_support_moments}
Let $\boldsymbol a$ be a $(p,2,M,\Delta_k)$-atom
associated with $B=B(\mathbf y_0,r)$.
Then $\boldsymbol a$ vanishes $dw$-almost everywhere
outside $\mathcal O(B)$,
\[
    \|\boldsymbol a\|_{L^2(dw)}
    \leq w(B)^{1/2-1/p},
\]
and
\[
    \int_{\mathbb R^N}
    \boldsymbol a(\mathbf x)P(\mathbf x)\,dw(\mathbf x)
    =0
    \qquad\text{for every }P\in\mathcal P_{s_p}.
\]
\end{lemma}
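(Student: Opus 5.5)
The lemma has three parts: support, size, and cancellation. I take them in that order, working directly from Definition~\ref{def:operator_atom}. Throughout, $\boldsymbol a=\Delta_k^M\boldsymbol b$ with $\boldsymbol b\in\mathcal D(\Delta_k^M)$ supported in $\mathcal O(B)$.

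\textbf{Size.} Take $m=M$ in \eqref{eq:operator_atom_size}. This gives $r^{2M}\|\Delta_k^M\boldsymbol b\|_{L^2(dw)}\le r^{2M}w(B)^{1/2-1/p}$. Dividing by $r^{2M}$ yields the required bound $\|\boldsymbol a\|_{L^2(dw)}\le w(B)^{1/2-1/p}$.

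\textbf{Support.} I argue by duality. Fix $\varphi\in C_c^\infty(\mathbb R^N)$ whose support is disjoint from the closed set $\overline{\mathcal O(B)}$. By self-adjointness (as in the proof of Lemma~\ref{lem:localized_polynomial_moments}),
\[
\int\boldsymbol a\,\varphi\,dw=\int\boldsymbol b\,\Delta_k^M\varphi\,dw .
\]
By \eqref{eq:laplace_formula}, $\Delta_k\varphi(\mathbf x)$ involves only $\varphi$ and its derivatives at $\mathbf x$ and at the reflected points $\sigma_\alpha(\mathbf x)$. The set $\mathcal O(B)$ is $G$-invariant, so its complement is $G$-invariant as well. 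Hence, iterating, for $\mathbf x\in\mathcal O(B)$ every point at which $\varphi$ is evaluated lies in $\mathcal O(B)$, where $\varphi$ vanishes. Therefore $\Delta_k^M\varphi$ vanishes on $\mathcal O(B)$.

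One point needs care: $\delta_\alpha\varphi$ must be read as a limit near the hyperplanes $\langle\alpha,\mathbf x\rangle=0$. Since $\varphi$ vanishes on a $G$-invariant open neighbourhood of $\overline{\mathcal O(B)}$, $\Delta_k^M\varphi$ vanishes on $\overline{\mathcal O(B)}$ too. (If needed, I shrink $\operatorname{supp}\varphi$ slightly, using the fact that $\partial\mathcal O(B)$ has $dw$-measure zero.)

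Consequently $\int\boldsymbol a\,\varphi\,dw=0$ for every such $\varphi$. Since $\boldsymbol a\in L^2(dw)\subset L^1_{\rm loc}(dw)$, this gives $\boldsymbol a=0$ almost everywhere outside $\mathcal O(B)$.

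\textbf{Cancellation.} This is the main point. Let $P\in\mathcal P_{s_p}$. Since $\boldsymbol a\in L^2(dw)$ has bounded support, $\boldsymbol a\in L^1(dw)$ and the integral against $P$ converges. Choose $\psi\in C_c^\infty$ with $\psi\equiv1$ on a neighbourhood of $\overline{\mathcal O(B)}$ and $\psi$ $G$-invariant; averaging any cutoff over $G$ produces one. By the support part,
\[
\int\boldsymbol aP\,dw=\int\boldsymbol a\,P\psi\,dw=\int\boldsymbol b\,\Delta_k^M(P\psi)\,dw .
\]
On a $G$-invariant neighbourhood of $\mathcal O(B)$, the function $P\psi$ coincides with $P$. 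By the same locality-modulo-$G$ observation, $\Delta_k^M(P\psi)=\Delta_k^M P$ on $\mathcal O(B)$. Finally, $\Delta_k^M P=0$ by \eqref{eq:polynomial_lower}, because $\Delta_k=\sum_jT_j^2$ lowers degree by $2$ and $2M>s_p$ by \eqref{eq:operator_atom_order}. Hence $\int\boldsymbol aP\,dw=0$.

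The only genuinely delicate point is the one shared by the support and cancellation parts: that $\Delta_k^M$ is local up to the action of $G$. I would justify this either by the explicit formula \eqref{eq:laplace_formula} applied to smooth functions, or by the identity $\Delta_k\circ\sigma=\sigma\circ\Delta_k$. Everything else is routine.
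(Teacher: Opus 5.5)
Your proposal is correct and follows the paper's proof essentially step for step. The size bound comes from \eqref{eq:operator_atom_size} with $m=M$; the support claim comes from testing against $\varphi$ that vanishes near the $G$-invariant set $\overline{\mathcal O(B)}$, using that $\Delta_k^M$ involves only derivatives and reflections; and the cancellation comes from a $G$-invariant cutoff $\psi\equiv1$ near the orbit together with $\Delta_k^M P=0$ for $2M>s_p$. One step is stated only obliquely, in your parenthetical about shrinking $\operatorname{supp}\varphi$. Duality first gives $\boldsymbol a=0$ a.e.\ outside $\overline{\mathcal O(B)}$, and you then need that the boundary $\partial\mathcal O(B)\subseteq\bigcup_{\sigma\in G}\partial B(\sigma(\mathbf y_0),r)$ is $dw$-null, which is exactly how the paper concludes.
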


\begin{proof}
Write $\boldsymbol a=\Delta_k^M\boldsymbol b$
as in Definition~\ref{def:operator_atom}.
The size estimate follows from
\eqref{eq:operator_atom_size} with $m=M$.

To verify the support assertion, let
\[
    K=\bigcup_{\sigma\in G}
    {B(\sigma(\mathbf y_0),r)}.
\]
This set is $G$-invariant.
If $\varphi\in C_c^\infty(\mathbb R^N\setminus K)$,
then $\Delta_k^M\varphi$ vanishes on $K$, because
the Dunkl Laplacian involves ordinary derivatives
and reflections from $G$.
Self-adjointness therefore gives
\[
    \int_{\mathbb R^N}\boldsymbol a\varphi\,dw
    =
    \int_{\mathbb R^N}\boldsymbol b\Delta_k^M\varphi\,dw
    =0.
\]
Hence $\boldsymbol a$ vanishes almost everywhere outside
$K$. Since the boundaries of Euclidean balls have zero
$dw$-measure, the claimed support assertion follows.

Finally, choose a $G$-invariant function
$\zeta\in C_c^\infty(\mathbb R^N)$ equal to $1$
on a $G$-invariant neighborhood of $K$.
For $P\in\mathcal P_{s_p}$, self-adjointness gives
\[
    \int_{\mathbb R^N}\boldsymbol aP\,dw
    =
    \int_{\mathbb R^N}\boldsymbol a\,\zeta P\,dw
    =
    \int_{\mathbb R^N}
    \boldsymbol b\,\Delta_k^M(\zeta P)\,dw.
\]
On $K$, we have
$\Delta_k^M(\zeta P)=\Delta_k^MP=0$,
since $2M>s_p$ (cf.~\eqref{eq:polynomial_lower}).
This proves the cancellation property.
\end{proof}

We now apply Lemma~\ref{lem:localized_polynomial_moments}
to the function $\boldsymbol b$ defining an operator atom.
This yields the estimate needed to transfer polynomial
moments between groups of orbit points.

\begin{lemma}\label{lem:operator_atom_localized_moments}
Let $F$ be a $(p,2,M,\Delta_k)$-atom associated with
a ball $B_*=B(\mathbf y_0,r)$.
Let $\chi_{\mathbf x_0,R}$ and
\[
    U=B(\mathbf x_0,(L+1)R)
\]
be as in Lemma~\ref{lem:localized_polynomial_moments}.
Then, for every $P\in\mathcal P_{s_p}$,
\begin{equation}\label{eq:operator_atom_localized_moments}
    \left|
        \int_{\mathbb R^N}
        F(\mathbf x)P(\mathbf x)
        \chi_{\mathbf x_0,R}(\mathbf x)\,dw(\mathbf x)
    \right|
    \leq
    C\left(\frac rR\right)^{2M}
    \frac{w(B_*)^{1-1/p}}{w(U)^{1/2}}
    \|P\|_{L^2(U,dw)}.
\end{equation}
The constant $C$ depends only on $p$, $M$, $L$, $J$,
$\rho$, and the fixed Dunkl structure.
\end{lemma}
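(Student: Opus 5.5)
We write $F=\Delta_k^M\boldsymbol b$ as in Definition~\ref{def:operator_atom}, with $\boldsymbol b\in\mathcal D(\Delta_k^M)$ vanishing outside $\mathcal O(B_*)$. The plan is to apply Lemma~\ref{lem:localized_polynomial_moments} with the function $\boldsymbol b$ in place of $F$, and with $s=s_p$. That lemma requires exactly that its input lie in $\mathcal D(\Delta_k^M)$ and vanish outside the orbit of a ball, and $\boldsymbol b$ satisfies both. It gives, for every $P\in\mathcal P_{s_p}$,
\[
    \left|\int_{\mathbb R^N}F\,P\chi_{\mathbf x_0,R}\,dw\right|
    =\left|\int_{\mathbb R^N}\Delta_k^M\boldsymbol b\,P\chi_{\mathbf x_0,R}\,dw\right|
    \leq C R^{-2M}\left(\frac{w(B_*)}{w(U)}\right)^{1/2}\|\boldsymbol b\|_{L^2(dw)}\|P\|_{L^2(U,dw)}.
\]

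Next we insert the size condition \eqref{eq:operator_atom_size} with $m=0$, namely $\|\boldsymbol b\|_{L^2(dw)}\leq r^{2M}w(B_*)^{1/2-1/p}$. Substituting this into the display, the powers combine as
\[
    R^{-2M}r^{2M}=(r/R)^{2M},
    \qquad
    w(B_*)^{1/2}\,w(B_*)^{1/2-1/p}=w(B_*)^{1-1/p}.
\]
Together with the remaining factor $w(U)^{-1/2}\|P\|_{L^2(U,dw)}$, this is exactly \eqref{eq:operator_atom_localized_moments}.

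For the constants, the constant from Lemma~\ref{lem:localized_polynomial_moments} depends on $M$, $s$, $L$, $J$, $\rho$ and the Dunkl structure. Since $s=s_p$ is determined by $p$ and $\mathbf N$, the final constant depends only on the quantities listed in the statement.

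There is no real obstacle here. The only point needing care is that Lemma~\ref{lem:localized_polynomial_moments} is applied to $\boldsymbol b$ rather than to the atom $F$ itself. This is what converts the $L^2$ bound on $\boldsymbol b$, which carries the factor $r^{2M}$, into the gain $(r/R)^{2M}$.
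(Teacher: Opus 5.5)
Your proposal is correct and is the same as the paper's proof. You write $F=\Delta_k^M\boldsymbol b$, apply Lemma~\ref{lem:localized_polynomial_moments} to $\boldsymbol b$ rather than to $F$, insert $\|\boldsymbol b\|_{L^2(dw)}\leq r^{2M}w(B_*)^{1/2-1/p}$ from \eqref{eq:operator_atom_size} with $m=0$, and your accounting of the constant's dependence (with $s=s_p$ determined by $p$ and $\mathbf N$) is accurate.
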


\begin{proof}
By Definition~\ref{def:operator_atom}, there exists
$\boldsymbol b\in\mathcal D(\Delta_k^M)$ such that
\[
    F=\Delta_k^M\boldsymbol b,
    \qquad
    \boldsymbol b=0
    \quad dw\text{-almost everywhere outside }\mathcal O(B_*),
\]
and
\[
    \|\boldsymbol b\|_{L^2(dw)}
    \leq r^{2M}w(B_*)^{1/2-1/p}.
\]
Applying Lemma~\ref{lem:localized_polynomial_moments}
to $\boldsymbol b$, we obtain
\begin{align*}
    \left|
        \int_{\mathbb R^N}
        FP\chi_{\mathbf x_0,R}\,dw
    \right|
    &=
    \left|
        \int_{\mathbb R^N}
        \Delta_k^M\boldsymbol b\,
        P\chi_{\mathbf x_0,R}\,dw
    \right|\leq
    C R^{-2M}
    \left(\frac{w(B_*)}{w(U)}\right)^{1/2}
    \|\boldsymbol b\|_{L^2(dw)}
    \|P\|_{L^2(U,dw)}\\
    &\leq
    C\left(\frac rR\right)^{2M}
    \frac{w(B_*)^{1-1/p}}{w(U)^{1/2}}
    \|P\|_{L^2(U,dw)}.
\end{align*}
This proves the assertion.
\end{proof}

In the subsequent decomposition, the cutoffs will be chosen
to isolate groups of orbit points at scale $R$.
Estimate~\eqref{eq:operator_atom_localized_moments}
provides the factor $(r/R)^{2M}$ needed to control
the corresponding polynomial corrections.

\begin{theorem}\label{prop:operator_to_polynomial_atoms}
Let $0<p\leq1$ and let $M$ be a positive integer satisfying
\[
    M>\frac{\mathbf N(2-p)}{4p}.
\]
There exists a constant $C>0$ such that every
$(p,2,M,\Delta_k)$-atom $\boldsymbol{a}$ admits a finite representation
\[
    {\boldsymbol{a}}=\sum_{j=1}^{J_{\boldsymbol{a}}}\lambda_j a_j,
\]
where each $a_j$ is a Coifman-Weiss-type $({\rm CW},p,2)$-atom and
\[
    \sum_{j=1}^{J_{\boldsymbol{a}}}|\lambda_j|^p\leq C.
\]
The constant $C$ is independent of ${\boldsymbol{a}}$ and its associated
ball. 
\end{theorem}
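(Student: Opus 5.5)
Let $\boldsymbol a=\Delta_k^M\boldsymbol b$ be an operator atom associated with $B_*=B(\mathbf y_0,r)$. By Lemma~\ref{lem:operator_atom_support_moments}, $\boldsymbol a$ is supported in $\bigcup_{\sigma\in G}B(\sigma(\mathbf y_0),r)$, satisfies $\|\boldsymbol a\|_{L^2(dw)}\leq w(B_*)^{1/2-1/p}$, and has vanishing moments up to degree $s_p$. The idea is to split $\boldsymbol a$ into pieces that live near clusters of orbit points, and then to repair each piece's moments by polynomial corrections whose sizes are controlled by Lemma~\ref{lem:operator_atom_localized_moments}.

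\emph{Multiscale clustering.} Write $\mathcal Z_0=\{\sigma(\mathbf y_0):\sigma\in G\}$; it has at most $J=|G|$ points. Choose a finite sequence of scales $r=R_0<R_1<\dots<R_K$, with $K\leq |G|$, built by a standard chaining argument. At each scale $R_i$, group the orbit points into clusters so that two conditions hold. First, points in one cluster lie within $L R_i$ of a chosen center $\mathbf x_{i,\ell}$. Second, distinct clusters are at mutual distance $\geq 2(L+1)R_i$. Fix a large constant $A$ and set $R_{i+1}=A\cdot(\text{the smallest inter-cluster distance at level } i)$. The number of clusters strictly decreases from one level to the next, so after at most $|G|$ steps a single cluster remains. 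For each cluster $\mathcal Z_{i,\ell}$, form the cutoff $\chi_{i,\ell}=\chi_{\mathbf x_{i,\ell},R_i}$ of \eqref{eq:chi}, supported in $U_{i,\ell}=B(\mathbf x_{i,\ell},(L+1)R_i)$. At level $0$ every cluster is a single point $\sigma(\mathbf y_0)$ and $\chi=1$ on $B(\sigma(\mathbf y_0),r/4)$. To get $\chi=1$ on the whole support, I will use $\rho$ scaled by a fixed factor, e.g.\ replace $R$ by $4R$; this only changes constants. Separation makes the level-$i$ cutoffs disjoint, and $\chi_{i,\ell}=1$ on the supports of the level-$(i-1)$ cutoffs of its sub-clusters, provided $L$ is chosen relative to $A$.

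\emph{Telescoping with polynomial projections.} For each ball $U=U_{i,\ell}$, let $\Pi_U$ be the $L^2(U,dw)$-orthogonal projection onto $\mathcal P_{s_p}$. Use the dual basis $\{\phi_U\}$, so that $\phi_U\mathbf 1_U$ has the prescribed moments; $\Pi_U g$ is the polynomial $Q$ on $U$ with $\int gP\,dw=\int QP\mathbf 1_U\,dw$ for all $P\in\mathcal P_{s_p}$. Define $m_{i,\ell}=\Pi_{U_{i,\ell}}(\boldsymbol a\chi_{i,\ell})\mathbf 1_{U_{i,\ell}}$, which carries the same moments as $\boldsymbol a\chi_{i,\ell}$. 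Then
\[
\boldsymbol a=\sum_{\ell}\bigl(\boldsymbol a\chi_{0,\ell}-m_{0,\ell}\bigr)+\sum_{i=1}^{K}\sum_{\ell}\Bigl(m_{i,\ell}-\sum_{\ell'\subset\ell}m_{i-1,\ell'}\Bigr)+m_{K},
\]
where $\ell'\subset\ell$ ranges over the sub-clusters of $\ell$, and $m_K=0$ because $\boldsymbol a$ has vanishing moments and $\chi_K=1$ on its support. At level $0$, the term $\boldsymbol a\chi_{0,\ell}$ has the same moments as $m_{0,\ell}$. At higher levels, $\sum_{\ell'\subset\ell}\boldsymbol a\chi_{i-1,\ell'}=\boldsymbol a\chi_{i,\ell}$ on the support. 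Hence every bracket has vanishing moments up to $s_p$ and is supported in a single Euclidean ball ($U_{0,\ell}$ or $U_{i,\ell}$). The number of terms is bounded by a constant depending on $|G|$.

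\emph{Size estimates (main obstacle).} By Corollary~\ref{coro:polynomial_derivatives_L2}, $\|\Pi_U g\|_{L^2(U)}=\sup_{\|P\|_{L^2(U)}\le1}|\int gP\,dw|$. Lemma~\ref{lem:operator_atom_localized_moments} therefore gives $\|m_{i,\ell}\|_{L^2}\leq C(r/R_i)^{2M}w(B_*)^{1-1/p}w(U_{i,\ell})^{-1/2}$. The level-$0$ pieces are trivially fine, since $w(U_{0,\ell})\sim w(B(\sigma\mathbf y_0,r))=w(B_*)$. For a piece at level $i\geq1$ supported in $U=U_{i,\ell}$, its coefficient is $\lambda=\|{\rm piece}\|_{L^2}w(U)^{1/p-1/2}$. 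For the $m_{i,\ell}$ part this is
\[
\lambda\lesssim \Bigl(\frac r{R_i}\Bigr)^{2M}\Bigl(\frac{w(U)}{w(B_*)}\Bigr)^{1/p-1}.
\]
By \eqref{eq:growth} and $G$-invariance of $w$, $w(U)/w(B_*)\lesssim (R_i/r)^{\mathbf N}$. So $\lambda\lesssim (r/R_i)^{2M-\mathbf N(1/p-1)}\leq C$ by \eqref{eq:operator_atom_order}. The terms $m_{i-1,\ell'}$, viewed inside the larger ball $U_{i,\ell}$, cost an extra factor $(w(U_{i,\ell})/w(U_{i-1,\ell'}))^{1/p-1/2}$. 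This factor is potentially large, and here the gain must come from the ratio $(r/R_{i-1})^{2M}$ against $(R_i/r)^{\mathbf N(1/p-1/2)}$. I expect this to be the delicate point. If the direct bound fails because $R_i/R_{i-1}$ is unbounded, I would instead treat each $m_{i-1,\ell'}$ as its own CW piece on $U_{i-1,\ell'}$. For that I would transfer its moments to the larger ball by the same dual-basis construction, so the coefficient is $(r/R_{i-1})^{2M}(R_i/r)^{\mathbf N(1/p-1)}$ up to constants. I would then choose $R_{i}$ comparable to the inter-cluster distance (not larger) so that the transfer loss is absorbed. Since $2M>\mathbf N(1/p-1/2)$, the exponents should close. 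Finally, each bounded coefficient is summed over a number of pieces bounded in terms of $|G|$, which gives $\sum|\lambda_j|^p\leq C$.
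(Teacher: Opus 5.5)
There is a genuine gap, and it is exactly the step you flagged as delicate. Your scales $R_0<R_1<\dots<R_K$ are only the merge scales, so nothing controls the ratio $R_i/R_{i-1}$. Take rank one with $\mathcal O(\mathbf y_0)=\{\pm\mathbf y_0\}$ and $2\|\mathbf y_0\|=D\gg r$. Then $R_0\sim r$ and $R_1\sim D$, and Lemma~\ref{lem:operator_atom_localized_moments} at scale $R_0$ gives no decay at all: $\|m_{0,\ell'}\|_{L^2}\lesssim w(B_*)^{1/2-1/p}$. Inside the bracket supported on $U_{1,\ell}$, your estimates therefore bound this term's contribution only by $(w(U_{1,\ell})/w(B_*))^{1/p-1/2}$. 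Since $w(U_{1,\ell})/w(B_*)\gtrsim D/r$, this is unbounded.

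Your fallback does not repair this. The coefficient you write, $(r/R_{i-1})^{2M}(R_i/r)^{\mathbf N(1/p-1)}$, is $\sim(D/r)^{\mathbf N(1/p-1)}$ for $i=1$, which is unbounded when $p<1$. Choosing $R_i$ comparable to the inter-cluster distance only changes the constant $A$. What actually blows up is the ratio of the inter-cluster distance at level $i-1$ to the cluster scale $R_{i-1}$. The inequality $2M>\mathbf N(1/p-1/2)$ compares exponents of two different ratios and cannot absorb this. Even the sharper bound obtained by isolating the cluster with a cutoff at scale $\sim R_i$ does not help: tested against polynomials normalized on $U_{i-1,\ell'}$, it loses a factor $(R_i/R_{i-1})^{s_p}$. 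That leaves the exponent $2M-s_p-\mathbf N(1/p-1/2)$, which the hypothesis on $M$ does not make nonnegative. So the claim that each piece has a bounded coefficient and there are $O(|G|)$ pieces is not established.

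The missing idea is to interpolate \emph{all} dyadic scales between $r$ and $\operatorname{diam}\mathcal O(\mathbf y_0)$, as the paper does. It takes $D_\ell=16\cdot2^\ell r$ for $0\le\ell\le\ell_*$. At every scale, whether or not anything merges, the orbit is regrouped into the components $\mathscr C_\ell$ of the graph joining points at distance $<D_\ell$. The correction $q_{\ell,\mathbf C}$ lives on $U_{\ell,\mathbf C}=B(\mathbf c_{\ell,\mathbf C},4JD_\ell)$ and satisfies $\|q_{\ell,\mathbf C}\|_{L^2}\lesssim(r/D_\ell)^{2M}w(B_*)^{1-1/p}w(U_{\ell,\mathbf C})^{-1/2}$. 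Each transition term $\sum_{\mathbf C'\subseteq\mathbf C}q_{\ell,\mathbf C'}-q_{\ell+1,\mathbf C}$ compares only balls of comparable measure. Its coefficient is therefore $\lesssim(r/D_\ell)^{2M-\gamma}\lesssim2^{-\ell(2M-\gamma)}$, with $\gamma=\mathbf N(1/p-1)$.

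The number of atoms is then of order $|G|\log_2(\operatorname{diam}\mathcal O(\mathbf y_0)/r)$. This is finite but not uniformly bounded, which is why $J_{\boldsymbol a}$ depends on $\boldsymbol a$ in the statement. Uniformity comes from geometric decay of the coefficients, not from a bounded count.

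A smaller point: at level $0$ the orbit points need not be $2(L+1)r$-separated singletons, since they can lie within $r$ of each other when $\mathbf y_0$ is near a reflecting hyperplane. The paper starts from components at threshold $D_0=16r$. It needs only separation of order $D_\ell$ between components, because the cutoff \eqref{eq:chi} is supported in $\bigcup_{\mathbf z}B(\mathbf z,R/2)$ rather than in all of $U$.
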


\begin{proof}
It suffices to consider real-valued functions.

Let ${\boldsymbol{a}}$ be associated with $B_*=B(\mathbf y_0,r)$.
Set
\[
    s=s_p=\left\lfloor \mathbf N\left(\frac1p-1\right)\right\rfloor,
    \qquad
    \gamma=\mathbf N\left(\frac1p-1\right),
    \qquad
    J=|G|.
\]
Our assumption on $M$ implies
\begin{equation}\label{eq:operator_decomp_exponents}
    2M>\gamma+\frac{\mathbf N}{2}>\gamma\geq s.
\end{equation}
By Lemma~\ref{lem:operator_atom_support_moments},
\begin{equation}\label{eq:operator_decomp_basic}
    \|{\boldsymbol{a}}\|_{L^2(dw)}\leq w(B_*)^{1/2-1/p},
    \qquad
    \int_{\mathbb R^N}{\boldsymbol{a}}P\,dw=0
    \quad(P\in\mathcal P_s),
\end{equation}
and ${\boldsymbol{a}}$ vanishes almost everywhere outside
$\mathcal O(B_*)$.

We first group the points of the orbit
\[
    Z=\mathcal O(\mathbf y_0)
\]
at successive scales
\[
    D_\ell=16\cdot2^\ell r,
    \qquad \ell=0,1,\ldots.
\]
For each $\ell$, consider the graph with vertex set $Z$
in which two distinct vertices are joined whenever their
Euclidean distance is less than $D_\ell$.
Let $\mathscr C_\ell$ denote the family of connected
components of this graph.

There are at most $J$ components at each scale.
Moreover, if $\mathbf{C}\in\mathscr C_\ell$, then
\begin{equation}\label{eq:component_geometry}
    \operatorname{diam}\mathbf{C}\leq(J-1)D_\ell.
\end{equation}
Points belonging to different components are at distance
at least $D_\ell$.
Since $D_{\ell+1}=2D_\ell$, each component in
$\mathscr C_\ell$ is contained in a unique component
in $\mathscr C_{\ell+1}$.

\begin{figure}[htbp]
\centering
\begin{tikzpicture}[
    scale=0.95,
    vertex/.style={
        circle,
        fill=black,
        inner sep=1.7pt
    },
    orbit ball/.style={
        densely dashed,
        draw=black!45,
        line width=0.45pt
    },
    graph edge/.style={
        draw=black!80,
        line width=1.2pt
    },
    scale arrow/.style={
        -{Stealth[length=2mm]},
        draw=black!65,
        line width=0.7pt
    }
]

%
%

\foreach \panel/\shift/\threshold/\level/\components in {
    0/0/0.48/0/6,
    1/4.5/0.96/1/3,
    2/9/1.92/2/1
}{
    \begin{scope}[xshift=\shift cm]

        \foreach \i/\angle in {
            1/20,
            2/340,
            3/140,
            4/100,
            5/260,
            6/220
        }{
            \coordinate (z\panel-\i) at (\angle:1);
        }

        \foreach \i in {1,...,6}{
            \draw[orbit ball]
                (z\panel-\i)
                circle[radius={0.5*\threshold}];
        }

        \foreach \i/\angleA in {
            1/20,
            2/340,
            3/140,
            4/100,
            5/260,
            6/220
        }{
            \foreach \j/\angleB in {
                1/20,
                2/340,
                3/140,
                4/100,
                5/260,
                6/220
            }{
                \ifnum\i<\j\relax
                    \pgfmathtruncatemacro{\joined}{
                        2-2*cos(\angleA-\angleB)
                        < \threshold*\threshold
                    }
                    \ifnum\joined=1\relax
                        \draw[graph edge]
                            (z\panel-\i)--(z\panel-\j);
                    \fi
                \fi
            }
        }

        \foreach \i in {1,...,6}{
            \node[vertex] at (z\panel-\i) {};
        }

        \node at (0,2.35)
            {$D_{\level}=\threshold$};

        \node at (0,-2.35)
            {$|\mathscr C_{\level}|=\components$};

    \end{scope}
}

\node[above right=3pt] at (z0-1) {$\mathbf y_0$};

\draw[scale arrow] (1.65,2.35)--(2.85,2.35)
    node[midway,above] {$\times2$};

\draw[scale arrow] (6.15,2.35)--(7.35,2.35)
    node[midway,above] {$\times2$};

\end{tikzpicture}

\caption{
The component graphs for a generic orbit of the
$A_2$ reflection group at three successive scales.
The vertices are the points of
$Z=\mathcal O(\mathbf y_0)$, and thick edges join
pairs at Euclidean distance less than $D_\ell$.
Dashed circles represent the auxiliary balls
$B(\mathbf z,D_\ell/2)$.
As the scale increases, components merge:
six singletons become three pairs and then one
connected component.
}
\label{fig:orbit_component_graphs}
\end{figure}

For $\mathbf{C}\in\mathscr C_\ell$, choose a point (vertex)
$\mathbf c_{\ell,\mathbf{C}}\in \mathbf{C}$ and set
\[
    E_{\mathbf{C}}=\bigcup_{\mathbf z\in {\mathbf{C}}}B(\mathbf z,r),
    \qquad
    {\boldsymbol{a}}_{\mathbf{C}}={\boldsymbol{a}}\chi_{E_{\mathbf{C}}},
    \qquad
    U_{\ell,{\mathbf{C}}}=B(\mathbf c_{\ell,{\mathbf{C}}},4JD_\ell).
\]
The sets $E_{\mathbf{C}}$, ${\mathbf{C}}\in\mathscr C_\ell$, are pairwise
disjoint. Thus, at every scale,
\begin{equation}\label{eq:operator_component_partition}
    {\boldsymbol{a}}=\sum_{{\mathbf{C}}\in\mathscr C_\ell}{\boldsymbol{a}}_{\mathbf{C}}.
\end{equation}
Each ${\boldsymbol{a}}_{\mathbf{C}}$ vanishes outside $U_{\ell,{\mathbf{C}}}$.
If ${\mathbf{C}}\in\mathscr C_{\ell+1}$, then
\begin{equation}\label{eq:operator_component_children}
    {\boldsymbol{a}}_{\mathbf{C}}
    =
    \sum_{{{\mathbf{C}}'\in\mathscr C_\ell,{\mathbf{C}}'\subseteq {\mathbf{C}}}}
    {\boldsymbol{a}}_{{\mathbf{C}}'}.
\end{equation}

For each component  ${\mathbf{C}}\in\mathscr C_\ell$, define
\[
    \chi_{\ell,{\mathbf{C}}}(\mathbf x)
    =
    1-\prod_{\mathbf z\in {\mathbf{C}}}
    \left(
        1-\rho\left(\frac{\mathbf x-\mathbf z}{D_\ell}\right)
    \right),
\]
where $\rho$ is the fixed cutoff used in
Lemma~\ref{lem:localized_polynomial_moments}.
Since $r\leq D_\ell/16$, this function equals $1$ on $E_{\mathbf{C}}$.
It equals $0$ on every $E_{{\mathbf{C}}'}$ with
${\mathbf{C}}'\in\mathscr C_\ell$ and ${\mathbf{C}}'\neq {\mathbf{C}}$, because distinct
components are separated by at least $D_\ell$.
Consequently,
\begin{equation}\label{eq:operator_cutoff_identity}
    {\boldsymbol{a}}\chi_{\ell,{\mathbf{C}}}={\boldsymbol{a}}_{\mathbf{C}}
    \quad dw\text{-almost everywhere}.
\end{equation}

By~\eqref{eq:component_geometry}, the cutoff
$\chi_{\ell,{\mathbf{C}}}$ satisfies the assumptions of
Lemma~\ref{lem:operator_atom_localized_moments}
with
\[
    \mathbf x_0=\mathbf c_{\ell,{\mathbf{C}}},
    \qquad R=D_\ell,
    \qquad L=4J-1,
    \qquad U=U_{\ell,{\mathbf{C}}}.
\]
Hence, for every $P\in\mathcal P_s$,
\begin{equation}\label{eq:operator_component_moments}
    \left|\int_{\mathbb R^N}{\boldsymbol{a}}_{\mathbf{C}}P\,dw\right|
    \leq
    C\left(\frac r{D_\ell}\right)^{2M}
    \frac{w(B_*)^{1-1/p}}{w(U_{\ell,{\mathbf{C}}})^{1/2}}
    \|P\|_{L^2(U_{\ell,{\mathbf{C}}},dw)}.
\end{equation}

We next represent these moments by localized polynomials.
Let $d_s=\dim\mathcal P_s$.
For every ball $U$, choose a real basis
$\{p_{U,n}\}_{n=1}^{d_s}$ of $\mathcal P_s$
that is orthonormal in $L^2(U,dw)$.
For a real-valued function $h\in L^2(dw)$ vanishing
outside $U$, set
\[
    R_Uh
    =
    \chi_U\sum_{n=1}^{d_s}
    \left(\int_Uh\,p_{U,n}\,dw\right)p_{U,n}.
\]
By orthonormality,
\begin{equation}\label{eq:operator_projection_properties}
\begin{split}
    \int_{\mathbb R^N}(R_Uh)P\,dw
    &=\int_{\mathbb R^N}hP\,dw
    \qquad(P\in\mathcal P_s),\\
    \|R_Uh\|_{L^2(dw)}&\leq\|h\|_{L^2(dw)},
    \qquad
    \|h-R_Uh\|_{L^2(dw)}\leq\|h\|_{L^2(dw)}.
\end{split}
\end{equation}
Define
\[
    q_{\ell,{\mathbf{C}}}=R_{U_{\ell,{\mathbf{C}}}}{\boldsymbol{a}}_{\mathbf{C}}.
\]
Applying~\eqref{eq:operator_component_moments}
to each $p_{U_{\ell,{\mathbf{C}}},n}$ gives
\begin{equation}\label{eq:operator_projection_decay}
    \|q_{\ell,{\mathbf{C}}}\|_{L^2(dw)}
    \leq
    C\sqrt{d_s}\left(\frac r{D_\ell}\right)^{2M}
    \frac{w(B_*)^{1-1/p}}{w(U_{\ell,{\mathbf{C}}})^{1/2}}.
\end{equation}

At the initial scale $\ell=0$, put
\[
    h_{\mathbf{C}}={\boldsymbol{a}}_{\mathbf{C}}-q_{0,{\mathbf{C}}},
    \qquad {\mathbf{C}}\in\mathscr C_0.
\]
Each $h_{\mathbf{C}}$, ${\mathbf{C}}\in\mathscr C_0$, vanishes outside $U_{0,{\mathbf{C}}}$ and has vanishing
moments against polynomials from $\mathcal P_s$.
By~\eqref{eq:operator_projection_properties}
and~\eqref{eq:operator_decomp_basic},
\[
    \|h_{\mathbf{C}}\|_{L^2(dw)}
    \leq\|{\boldsymbol{a}}_{\mathbf{C}}\|_{L^2(dw)}
    \leq w(B_*)^{1/2-1/p}.
\]
Since $\mathbf c_{0,{\mathbf{C}}}\in\mathcal O(\mathbf y_0)$
and the radius of $U_{0,{\mathbf{C}}}$ is $64Jr$,
$G$-invariance and doubling imply
\[
    w(U_{0,{\mathbf{C}}})\asymp w(B_*).
\]
Thus every nonzero $h_{\mathbf{C}}$ is a scalar multiple of a
$({\rm CW},p,2)$-atom, with a uniformly bounded coefficient.
There are at most $J$ such terms.

We now construct the terms joining successive scales.
For ${\mathbf{C}}\in\mathscr C_{\ell+1}$, define
\begin{equation}\label{eq:Hlc}
    H_{\ell,{\mathbf{C}}}
    =
    \sum_{{{\mathbf{C}}'\in\mathscr C_\ell, {\mathbf{C}}'\subseteq {\mathbf{C}}}}
    q_{\ell,{\mathbf{C}}'}
    -
    q_{\ell+1,{\mathbf{C}}}.
\end{equation}
Equations~\eqref{eq:operator_component_children}
and~\eqref{eq:operator_projection_properties} imply
\[
    \int_{\mathbb R^N}H_{\ell,{\mathbf{C}}}P\,dw=0
    \qquad(P\in\mathcal P_s).
\]
Furthermore, all the balls $U_{\ell,{\mathbf{C}}'}$ occurring
in the sum~\eqref{eq:Hlc} are contained in $U_{\ell+1,{\mathbf{C}}}$.
Indeed, both centers belong to ${\mathbf{C}}$, so
\[
    \|\mathbf c_{\ell,{\mathbf{C}}'}-\mathbf c_{\ell+1,{\mathbf{C}}}\|
    \leq(J-1)D_{\ell+1}.
\]
Since the radius of $U_{\ell,{\mathbf{C}}'}$ is
$4JD_\ell=2JD_{\ell+1}$, the claimed inclusion follows.
Hence $H_{\ell,{\mathbf{C}}}$ vanishes outside $U_{\ell+1,{\mathbf{C}}}$.

The same geometry and doubling give
\[
    w(U_{\ell,{\mathbf{C}}'})\asymp w(U_{\ell+1,{\mathbf{C}}})
    \qquad({\mathbf{C}}'\subseteq {\mathbf{C}}).
\]
There are at most $J$ terms.
Using~\eqref{eq:operator_projection_decay}, we obtain
\begin{equation}\label{eq:operator_transition_L2}
    \|H_{\ell,{\mathbf{C}}}\|_{L^2(dw)}
    \leq
    C\left(\frac r{D_\ell}\right)^{2M}
    \frac{w(B_*)^{1-1/p}}
         {w(U_{\ell+1,{\mathbf{C}}})^{1/2}}.
\end{equation}

For each nonzero $H_{\ell,{\mathbf{C}}}$, set
\[
    \lambda_{\ell,{\mathbf{C}}}
    =
    \|H_{\ell,{\mathbf{C}}}\|_2
    w(U_{\ell+1,{\mathbf{C}}})^{1/p-1/2},
    \qquad
    a_{\ell,{\mathbf{C}}}=\lambda_{\ell,{\mathbf{C}}}^{-1}H_{\ell,{\mathbf{C}}}.
\]
Then $a_{\ell,{\mathbf{C}}}$ is a $({\rm CW},p,2)$-atom associated with the ball $U_{\ell+1,\mathbf{C}}$.
By~\eqref{eq:operator_transition_L2},
\[
    |\lambda_{\ell,{\mathbf{C}}}|
    \leq
    C\left(\frac r{D_\ell}\right)^{2M}
    \left(
        \frac{w(U_{\ell+1,{\mathbf{C}}})}{w(B_*)}
    \right)^{1/p-1}.
\]
The center of $U_{\ell+1,{\mathbf{C}}}$ belongs to
$\mathcal O(\mathbf y_0)$.
Therefore, $G$-invariance and the growth estimate
\eqref{eq:growth} imply
\[
    \frac{w(U_{\ell+1,{\mathbf{C}}})}{w(B_*)}
    \leq C\left(\frac{D_\ell}{r}\right)^{\mathbf N}.
\]
Consequently,
\begin{equation}\label{eq:operator_transition_coefficients}
    |\lambda_{\ell,{\mathbf{C}}}|
    \leq
    C\left(\frac r{D_\ell}\right)^{2M-\gamma}
    \leq C2^{-\ell(2M-\gamma)}.
\end{equation}

It remains to verify the finite telescoping identity.
Set
\[
    g_\ell=\sum_{{\mathbf{C}}\in\mathscr C_\ell}q_{\ell,{\mathbf{C}}}.
\]
Choose an integer $\ell_*$ such that
$D_{\ell_*}>\operatorname{diam}Z$.
Then $\mathscr C_{\ell_*}=\{Z\}$ and ${\boldsymbol{a}}_Z={\boldsymbol{a}}$ (see \eqref{eq:operator_component_partition}).
By the global cancellation in
\eqref{eq:operator_decomp_basic},
\[
    g_{\ell_*}=R_{U_{\ell_*,Z}}{\boldsymbol{a}}=0.
\]
Thus
\begin{align*}
    {\boldsymbol{a}}
    &=
    ({\boldsymbol{a}}-g_0)
    +\sum_{\ell=0}^{\ell_*-1}(g_\ell-g_{\ell+1})=
    \sum_{{\mathbf{C}}\in\mathscr C_0}h_{\mathbf{C}}
    +
    \sum_{\ell=0}^{\ell_*-1}
    \sum_{{\mathbf{C}}\in\mathscr C_{\ell+1}}H_{\ell,{\mathbf{C}}}.
\end{align*}
When $\ell_*=0$, the second sum is empty.

Normalize the nonzero initial terms $h_{\mathbf{C}}$ in the same
way as the transition terms, and omit all zero terms.
Their coefficients have uniformly bounded $p$-sum.
For the remaining coefficients,
\eqref{eq:operator_transition_coefficients} and
\eqref{eq:operator_decomp_exponents} yield
\[
    \sum_{\ell=0}^{\ell_*-1}
    \sum_{{{\mathbf{C}}\in\mathscr C_{\ell+1},
                    H_{\ell,{\mathbf{C}}}\neq0}}
    |\lambda_{\ell,{\mathbf{C}}}|^p
    \leq
    C\sum_{\ell=0}^{\infty}
    2^{-\ell p(2M-\gamma)}
    \leq C.
\]
This proves the required uniform coefficient bound.

All sums in the construction are finite and all their
terms belong to $L^2(dw)$.
The resulting identity therefore holds in $L^2(dw)$
and, under the identification by integration against $dw$,
also in $\mathcal S'(\mathbb R^N)$.
\end{proof}

\begin{remark}\label{rem:higher_order_cancellation}
Let $s\in\mathbb N_0$ satisfy $s\geq s_p$, and suppose, in
addition, that $2M>s$. Then the proof of Theorem~\ref{prop:operator_to_polynomial_atoms}
applies verbatim with $\mathcal P_{s_p}$ replaced by
$\mathcal P_s$. Consequently, every
$(p,2,M,\Delta_k)$-atom admits a finite decomposition into
Coifman--Weiss-type $({\rm CW},p,2)$-atoms satisfying
\[
    \int_{\mathbb R^N}a_j(\mathbf x)P(\mathbf x)\,dw(\mathbf x)=0,
    \qquad P\in\mathcal P_s,
\]
with a uniform bound for the $\ell^p$-sum of the coefficients.

Indeed, the only additional requirement is that
$\Delta_k^M P=0$ for $P\in\mathcal P_s$, which follows from
\[
    \Delta_k^M\mathcal P_s\subseteq\mathcal P_{s-2M}=\{0\}.
\]
Thus, if $M$ is chosen sufficiently large, the atomic
characterization of $H^p_{\mathrm{Dunkl}}$ remains valid when
the atoms are required to have vanishing moments up to any fixed
degree $s\geq s_p$.
\end{remark}

\section{Proof of the Coifman--Weiss-type atomic characterization}\label{sec:Hardy}

We first record a basic fact concerning the atomic spaces.
It guarantees that atomic series define tempered distributions
and that convergence in the atomic quasi-norm implies
distributional convergence. This will allow us to identify
the distributions obtained from the operator atomic
decomposition with elements of the polynomial atomic space.

\begin{lemma}\label{lem:distributional_convergence}
Let $0<p\leq1$ and $1<q\leq\infty$.
Then:
\begin{enumerate}
    \item[(i)]
    For every sequence $(a_j)_{j\geq1}$ of $({\rm CW},p,q)$-atoms
    and every $(\lambda_j)_{j\geq1}\in\ell^p$,
    the series $\sum_{j=1}^\infty\lambda_j a_j$
    converges unconditionally in $\mathcal S'(\mathbb R^N)$.
    \item[(ii)]
    The inclusion
    $H^p_{\mathrm{CW},q} \subset\mathcal S'(\mathbb R^N)$
    is continuous.
\end{enumerate}
\end{lemma}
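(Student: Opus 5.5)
The plan is to prove a single uniform estimate: for every $\varphi\in\mathcal S(\mathbb R^N)$ and every $({\rm CW},p,q)$-atom $a$,
\[
    \Big|\int_{\mathbb R^N} a\varphi\,dw\Big|\leq C\,\|\varphi\|_{(K)},
\]
where $\|\cdot\|_{(K)}$ is a fixed Schwartz seminorm, for instance
\[
    \|\varphi\|_{(K)}=\sum_{|\beta|\leq s_p+1}\sup_{\mathbf x}(1+\|\mathbf x\|)^{K}|\partial^\beta\varphi(\mathbf x)|
\]
with $K$ large, and $C$ is independent of $a$. With this bound in hand, (i) follows from $p\leq1$. Indeed,
\[
    \sum_j|\lambda_j||\langle a_j,\varphi\rangle|\leq C\|\varphi\|_{(K)}\Big(\sum_j|\lambda_j|^p\Big)^{1/p},
\]
so the series converges absolutely, and therefore unconditionally, for each $\varphi$. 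The limit functional is continuous on $\mathcal S$ since it is dominated by a seminorm. For (ii), take any representation $F=\sum_j\lambda_ja_j$. The same bound gives $|\langle F,\varphi\rangle|\leq C\|\varphi\|_{(K)}\|(\lambda_j)\|_{\ell^p}$, and taking the infimum over representations yields $|\langle F,\varphi\rangle|\leq C\|\varphi\|_{(K)}\|F\|_{H^p_{\mathrm{CW},q}}$. Because the quasi-norm satisfies $\|F+G\|^p\leq\|F\|^p+\|G\|^p$, this gives continuity of the inclusion.

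To prove the uniform bound, let $a$ be supported in $B=B(\mathbf x_0,R)$. By Hölder's inequality, $\|a\|_{L^1(dw)}\leq w(B)^{1-1/q}\|a\|_{L^q}\leq w(B)^{1-1/p}$. I would split into two cases.

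\emph{Large balls.} Suppose $w(B)\geq w(B(0,1))$, or more generally we are in a regime where the size bound suffices. Then I use only $|\langle a,\varphi\rangle|\leq \|a\|_{L^1}\sup|\varphi|\leq w(B)^{1-1/p}\sup|\varphi|$, and this is at most $C\sup|\varphi|$ because $1-1/p\leq0$.

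\emph{Small balls}, that is, $w(B)<w(B(0,1))$, which forces $R$ to be small. Here I use cancellation: subtract the Taylor polynomial $P$ of $\varphi$ at $\mathbf x_0$ of degree $s_p$. On $B$,
\[
    |\varphi-P|\leq CR^{s_p+1}\sup_{B}|\nabla^{s_p+1}\varphi|\leq CR^{s_p+1}(1+\|\mathbf x_0\|)^{-K}\|\varphi\|_{(K)},
\]
where the last step uses $R\leq1$. This gives
\[
    |\langle a,\varphi\rangle|\leq C\,w(B)^{1-1/p}R^{s_p+1}(1+\|\mathbf x_0\|)^{-K}\|\varphi\|_{(K)}.
\]

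The main obstacle is showing that $w(B)^{1-1/p}R^{s_p+1}(1+\|\mathbf x_0\|)^{-K}$ is bounded when $R\leq1$. By \eqref{eq:growth} applied with $r=R$ and radius $1$ at the center $\mathbf x_0$, we have $w(B(\mathbf x_0,R))\geq cR^{\mathbf N}w(B(\mathbf x_0,1))$. By \eqref{eq:balls_asymp}, $w(B(\mathbf x_0,1))\geq c$, since each factor $(|\langle\mathbf x_0,\alpha\rangle|+1)^{k(\alpha)}\geq1$. Hence $w(B)^{1-1/p}\leq CR^{-\mathbf N(1/p-1)}$. The exponent satisfies $s_p+1>\mathbf N(1/p-1)$ by the definition of the floor, so $R^{s_p+1-\mathbf N(1/p-1)}\leq1$ for $R\leq1$, and the weight factor is harmless.

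To make the case split clean, I would split by $R\leq1$ versus $R>1$ rather than by measure. For $R>1$, the same lower bound gives $w(B)\geq c\,w(B(\mathbf x_0,1))\geq c'$, and so the size estimate bounds $w(B)^{1-1/p}$ by a constant. This completes both cases, and the argument does not need the restriction $q>1$.
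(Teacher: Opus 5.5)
Your proposal is correct and follows the paper's proof essentially step for step. Both arguments establish a uniform bound of $|\langle a,\varphi\rangle|$ by a fixed Schwartz seminorm, using $\|a\|_{L^1(dw)}\le w(B)^{1-1/p}$ together with $w(B(\mathbf x_0,R))\gtrsim R^{\mathbf N}$ when $R\ge 1$, and cancellation plus Taylor's theorem when $R<1$; (i) and (ii) then follow from $\ell^p\subseteq\ell^1$ and by taking the infimum over representations. The polynomial weight in your seminorm is harmless but unnecessary (the paper uses $\|\varphi\|_\infty+\sum_{|\beta|=s_p+1}\|\partial^\beta\varphi\|_\infty$), and you are right that the restriction $q>1$ is never used.
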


\begin{proof}
Set
\[
    s=s_p=\left\lfloor\mathbf N\left(\frac1p-1\right)\right\rfloor,
    \qquad
    \mathfrak q_p(\phi)
    =
    \|\phi\|_\infty
    +
    \sum_{|\beta|=s+1}\|\partial^\beta\phi\|_\infty.
\]
We first show that
\begin{equation}\label{eq:atom_test_bound}
    |\langle a,\phi\rangle|
    \leq C\mathfrak q_p(\phi)
\end{equation}
uniformly over all $({\rm CW},p,q)$-atoms $a$.

Let $a$ be associated with $B=B(\mathbf x_0,r)$.
By H\"older's inequality and the size condition,
with the convention $1/\infty=0$, we have
\[
    \|a\|_{L^1(dw)}
    \leq
    w(B)^{1-1/q}\|a\|_{L^q(dw)}
    \leq
    w(B)^{1-1/q}w(B)^{1/q-1/p}
    =
    w(B)^{1-1/p}.
\]
Recall from~\eqref{eq:balls_asymp} that
\[
    w(B(\mathbf x,r))\geq cr^{\mathbf N}.
\]
If $r\geq1$, it follows that
\[
    |\langle a,\phi\rangle|
    \leq w(B)^{1-1/p}\|\phi\|_\infty
    \leq C\|\phi\|_\infty.
\]

Suppose that $0<r<1$.
For a real-valued $\phi\in\mathcal S(\mathbb R^N)$,
let $P$ be its Taylor polynomial of degree $s$
at $\mathbf x_0$.
By cancellation and Taylor's theorem,
\begin{align*}
    |\langle a,\phi\rangle|
    &=
    \left|
        \int_B a(\mathbf x)
        \bigl(\phi(\mathbf x)-P(\mathbf x)\bigr)
        \,dw(\mathbf x)
    \right|\leq
    Cr^{s+1}w(B)^{1-1/p}
    \sum_{|\beta|=s+1}\|\partial^\beta\phi\|_\infty\\
    &\leq
    Cr^{s+1-\mathbf N(1/p-1)}
    \sum_{|\beta|=s+1}\|\partial^\beta\phi\|_\infty
    \leq C\mathfrak q_p(\phi),
\end{align*}
since $s+1>\mathbf N(1/p-1)$.
For complex-valued $\phi$, apply the same argument to
its real and imaginary parts.
This proves~\eqref{eq:atom_test_bound}.

Since $\ell^p\subseteq\ell^1$,
\[
    \sum_{j=1}^\infty
    |\lambda_j\langle a_j,\phi\rangle|
    \leq
    C\mathfrak q_p(\phi)\sum_{j=1}^\infty|\lambda_j|
    \leq
    C\mathfrak q_p(\phi)
    \left(\sum_{j=1}^\infty|\lambda_j|^p\right)^{1/p}.
\]
Thus
\[
    \langle F,\phi\rangle
    :=
    \sum_{j=1}^\infty\lambda_j\langle a_j,\phi\rangle
\]
defines a tempered distribution.
Moreover, for every finite set $I\subset\mathbb N$,
\[
    \left|
        \Big\langle
            F-\sum_{j\in I}\lambda_ja_j,\phi
        \Big\rangle
    \right|
    \leq
    C\mathfrak q_p(\phi)\sum_{j\notin I}|\lambda_j|.
\]
The right-hand side tends to zero as $I$ increases
to $\mathbb N$, proving~(i).

Finally, applying the preceding estimate to any atomic
representation of $f\in H^p_{\mathrm{CW},q}$ and
taking the infimum over all representations gives
\begin{equation}\label{eq:atomic_distribution_bound}
    |\langle f,\phi\rangle|
    \leq
    C\mathfrak q_p(\phi)\|f\|_{H^p_{\mathrm{CW},q}}.
\end{equation}
This proves~(ii), since $\mathfrak q_p$ is a continuous
seminorm on $\mathcal S(\mathbb R^N)$.
\end{proof}
We now prove the atomic characterization of
$H^p_{\mathrm{Dunkl}}$. The auxiliary results established above
allow us to treat the two inclusions separately. First, the operator
atomic decomposition and Theorem~\ref{prop:operator_to_polynomial_atoms}
yield
\[
    H^p_{\mathrm{Dunkl}}
    \subseteq
    H^p_{\mathrm{CW},2}.
\]
Conversely, the uniform square-function estimate for individual
$({\rm CW},p,2)$-atoms, together with completeness and distributional
convergence, gives
\[
    H^p_{\mathrm{CW},2}
    \subseteq
    H^p_{\mathrm{Dunkl}}.
\]

\begin{theorem}\label{thm:Hp_inclusion}
Let $0<p\leq1$.
Under the natural identification with tempered distributions,
\[
    H^p_{\mathrm{Dunkl}}\subseteq H^p_{\mathrm{CW},2},
    \qquad
    \|f\|_{H^p_{\mathrm{CW},2}}
    \leq C\|f\|_{H^p_{\mathrm{Dunkl}}}.
\]
\end{theorem}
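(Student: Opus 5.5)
The plan is to combine the operator atomic decomposition of Theorem~\ref{thm:operator_atomic_characterization} with the finite decomposition of Theorem~\ref{prop:operator_to_polynomial_atoms}, and then to reorganize the resulting double series. Fix $M>\mathbf N(2-p)/(4p)$.

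First I treat $f\in\mathbb H^p_{\mathrm{Dunkl}}$. By Theorem~\ref{thm:operator_atomic_characterization} we can write $f=\sum_j\lambda_j\boldsymbol a_j$ in $L^2(dw)$ and in $\mathcal S'(\mathbb R^N)$, where the $\boldsymbol a_j$ are $(p,2,M,\Delta_k)$-atoms and $\sum_j|\lambda_j|^p\leq C\|f\|^p_{\mathbb H^p_{\mathrm{Dunkl}}}$. Theorem~\ref{prop:operator_to_polynomial_atoms} gives, for each $j$, a finite sum $\boldsymbol a_j=\sum_{i=1}^{J_j}\mu_{j,i}a_{j,i}$ with $({\rm CW},p,2)$-atoms $a_{j,i}$ and $\sum_i|\mu_{j,i}|^p\leq C$. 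The family $\{\lambda_j\mu_{j,i}\}$ then satisfies $\sum_{j,i}|\lambda_j\mu_{j,i}|^p\leq C\sum_j|\lambda_j|^p$. Enumerate it as a single sequence. By Lemma~\ref{lem:distributional_convergence}(i), the series $\sum_{j,i}\lambda_j\mu_{j,i}a_{j,i}$ converges unconditionally in $\mathcal S'$. Grouping its terms by $j$, which are finite blocks, shows that it converges to $\sum_j\lambda_j\boldsymbol a_j=f$. Hence $f\in H^p_{\mathrm{CW},2}$ and $\|f\|_{H^p_{\mathrm{CW},2}}\leq C\|f\|_{H^p_{\mathrm{Dunkl}}}$.

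Next I pass to general elements of $H^p_{\mathrm{Dunkl}}$. By Theorem~\ref{thm:operator_atomic_characterization}, any $f\in H^p_{\mathrm{Dunkl}}$ is identified with a distribution of the form $\sum_j\lambda_j\boldsymbol a_j$, with $\sum_j|\lambda_j|^p\leq C\|f\|^p_{H^p_{\mathrm{Dunkl}}}$ and convergence in $\mathcal S'$. The argument of the previous paragraph applies verbatim to this representation, so the $L^2$ convergence is not needed. This gives both the inclusion and the quasi-norm bound.

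The main point requiring care is the identification. I must make sure that the tempered distribution attached to an element of the completion $H^p_{\mathrm{Dunkl}}$ is exactly the $\mathcal S'$-limit of its operator-atom series, as asserted in Theorem~\ref{thm:operator_atomic_characterization}. I must also justify regrouping the unconditionally convergent double series. For the regrouping I would rely on the estimate $|\langle a,\phi\rangle|\leq C\mathfrak q_p(\phi)$ together with $\ell^p\subset\ell^1$. These give absolute convergence of the scalar series $\sum_{j,i}\lambda_j\mu_{j,i}\langle a_{j,i},\phi\rangle$, so any grouping yields the same limit.
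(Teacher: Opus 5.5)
Your proposal is correct and follows essentially the same route as the paper: you use the operator-atom decomposition, then the finite $({\rm CW},p,2)$-decomposition of each operator atom with uniformly bounded $\ell^p$-sums, and then Lemma~\ref{lem:distributional_convergence} for unconditional $\mathcal S'$-convergence and regrouping by finite blocks. The preliminary step for $f\in\mathbb H^p_{\mathrm{Dunkl}}$ is unnecessary, as you note yourself; the paper applies the argument directly to general $f\in H^p_{\mathrm{Dunkl}}$.
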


\begin{proof}
Fix an integer $M>\mathbf N(2-p)/(4p)$.
By Theorem~\ref{thm:operator_atomic_characterization},
every $f\in H^p_{\mathrm{Dunkl}}$ admits a representation
\[
    f=\sum_{j=1}^{\infty}\mu_j\boldsymbol a_j
    \quad\text{in }\mathcal S'(\mathbb R^N),
    \qquad
    \sum_{j=1}^{\infty}|\mu_j|^p
    \leq C\|f\|_{H^p_{\mathrm{Dunkl}}}^p,
\]
where the $\boldsymbol a_j$ are $(p,2,M,\Delta_k)$-atoms.
By Theorem~\ref{prop:operator_to_polynomial_atoms},
\[
    \boldsymbol a_j=\sum_{k=1}^{J_j}\lambda_{j,k}a_{j,k},
    \qquad
    \sum_{k=1}^{J_j}|\lambda_{j,k}|^p\leq C,
\]
where the $a_{j,k}$ are $({\rm CW},p,2)$-atoms and the constant
is independent of $j$. Hence
\[
    \sum_{j=1}^{\infty}\sum_{k=1}^{J_j}
    |\mu_j\lambda_{j,k}|^p
    \leq C\|f\|_{H^p_{\mathrm{Dunkl}}}^p.
\]
By Lemma~\ref{lem:distributional_convergence}, the series
\[
    \sum_{j=1}^{\infty}\sum_{k=1}^{J_j}
    \mu_j\lambda_{j,k}a_{j,k}
\]
converges unconditionally in $\mathcal S'(\mathbb R^N)$.
Its partial sums over the first $n$ complete blocks equal
$\sum_{j=1}^n\mu_j\boldsymbol a_j$ and therefore converge
to $f$. Thus the series is an atomic representation of $f$,
and the definition of the atomic quasi-norm yields
\[
    \|f\|_{H^p_{\mathrm{CW},2}}^p
    \leq
    \sum_{j=1}^{\infty}\sum_{k=1}^{J_j}
    |\mu_j\lambda_{j,k}|^p
    \leq C\|f\|_{H^p_{\mathrm{Dunkl}}}^p.
\]
\end{proof}

\begin{proposition}\label{prop:atom_square_bound}
    Fix $0<p\leq1$. There is a constant $C>0$ such that
    for every Coifman--Weiss $({\rm CW},p,2)$-atom $a$, with
    cancellation against $\mathcal P_{s_p}$, one has
    \begin{equation}
        \|Sa\|_{L^p(dw)}\leq C.
    \end{equation}
\end{proposition}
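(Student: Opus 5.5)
Let $a$ be a $({\rm CW},p,2)$-atom supported in $B=B(\mathbf x_0,r)$. We split $\mathbb R^N$ into the near region $\widetilde B=\mathcal O(B(\mathbf x_0,4r))$, the $4$-dilated orbit of the ball, and the far region $\mathbb R^N\setminus\widetilde B$. On $\widetilde B$ we use the $L^2(dw)$ boundedness of $S$, which holds by the spectral theorem applied to $Q_t$ and the usual Fubini argument for conical square functions ($\|Sf\|_{L^2(dw)}\leq C\|f\|_{L^2(dw)}$). Hölder's inequality then gives
\[
\int_{\widetilde B}(Sa)^p\,dw\leq w(\widetilde B)^{1-p/2}\|Sa\|_{L^2}^p\leq C w(B)^{1-p/2}w(B)^{p/2-1}=C,
\]
because $w(\widetilde B)\leq |G|\,w(B(\mathbf x_0,4r))\leq Cw(B)$ by doubling.

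For $\mathbf x$ outside $\widetilde B$ we need a pointwise bound on $Q_ta(\mathbf y)$ for $\|\mathbf y-\mathbf x\|<t$. Let $P_{\mathbf y}$ be the Taylor polynomial of degree $s=s_p$ of $\mathbf z\mapsto Q_t(\mathbf y,\mathbf z)$ at $\mathbf x_0$. The cancellation of $a$ gives
\[
Q_ta(\mathbf y)=\int_B a(\mathbf z)\bigl(Q_t(\mathbf y,\mathbf z)-P_{\mathbf y}(\mathbf z)\bigr)\,dw(\mathbf z),
\]
so, using \eqref{eq:Qt-bound} with $|\boldsymbol\beta|=s+1$ and $\|a\|_{L^1}\leq w(B)^{1-1/p}$,
\[
|Q_ta(\mathbf y)|\leq C\,w(B)^{1-1/p}\left(\frac rt\right)^{s+1}\sup_{\mathbf z\in B}\frac{e^{-c\,d(\mathbf y,\mathbf z)/t}}{w(B(\mathbf y,t+d(\mathbf y,\mathbf z)))}.
\]
We also keep the trivial bound with $\boldsymbol\beta=0$, which avoids the factor $(r/t)^{s+1}$. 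Since $d(\mathbf y,\mathbf z)\geq d(\mathbf x,\mathbf x_0)-t-r$ and $d(\mathbf x,\mathbf x_0)\geq 4r$, in the range $t<d(\mathbf x,\mathbf x_0)/2$ the exponential gives decay $e^{-c\,d(\mathbf x,\mathbf x_0)/t}$. In the range $t\geq d(\mathbf x,\mathbf x_0)/2$ we instead get $|Q_ta(\mathbf y)|\leq Cw(B)^{1-1/p}(r/t)^{s+1}w(B(\mathbf x_0,t))^{-1}$, using doubling and $G$-invariance to replace $w(B(\mathbf y,t))$ by $w(B(\mathbf x_0,t))$ up to constants.

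Inserting these bounds into $Sa(\mathbf x)$, with the normalized average over $B(\mathbf x,t)$ harmless, and integrating in $t$ yields
\[
Sa(\mathbf x)\leq C\,w(B)^{1-1/p}\left(\frac{r}{d(\mathbf x,\mathbf x_0)}\right)^{s+1}\frac{1}{w(B(\mathbf x_0,d(\mathbf x,\mathbf x_0)))},\qquad \mathbf x\notin\widetilde B.
\]
Here the small-$t$ part is absorbed by the factor $e^{-c\,d/t}$, which beats any power of $t$. We then decompose the far region into orbit annuli $\mathcal O(B(\mathbf x_0,2^{j+1}r))\setminus\mathcal O(B(\mathbf x_0,2^jr))$, $j\geq2$, each of measure at most $Cw(B(\mathbf x_0,2^jr))$. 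This gives
\[
\int_{\mathbb R^N\setminus\widetilde B}(Sa)^p\,dw\leq C\sum_{j\geq2}2^{-j(s+1)p}\left(\frac{w(B)}{w(B(\mathbf x_0,2^jr))}\right)^{p-1}.
\]
Since $p\leq1$, \eqref{eq:growth} bounds the ratio factor by $C2^{j\mathbf N(1-p)}$, so the general term is at most $C2^{-jp(s+1-\mathbf N(1/p-1))}$. The series converges precisely because $s_p+1>\mathbf N(1/p-1)$.

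The main obstacle is the pointwise far-region estimate: the Taylor remainder must be taken in the Euclidean variable $\mathbf z$, while the kernel decays only in the orbit distance $d$. The difficulty is getting the volume factor $w(B(\mathbf y,t+d))$, evaluated at intermediate points, comparable to $w(B(\mathbf x_0,d(\mathbf x,\mathbf x_0)))$. We handle this by taking the supremum over the Taylor segment in $B$, using that $d(\cdot,\cdot)$ varies by at most $r$ there and that $w(B(\mathbf y,\rho))$ changes only by doubling constants when $\mathbf y$ moves by at most $\rho$ or by an element of $G$. If this control turns out to be too crude in some range, we fall back on the weaker but sufficient factor $(1+d/t)^{-2}$ from Theorem~\ref{teo:heat_new}.
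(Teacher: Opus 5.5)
Your proof is correct and follows essentially the same route as the paper. The shared steps are:
\begin{itemize}
\item the local estimate on $\mathcal O(4B)$ via $L^2$-boundedness of $S$, H\"older, doubling and $G$-invariance;
\item the degree-$s_p$ Taylor expansion of $\mathbf z\mapsto Q_t(\mathbf y,\mathbf z)$ at $\mathbf x_0$, with a two-regime split in $t$ (you split at $d(\mathbf x,\mathbf x_0)/2$, the paper at $d(\mathbf x,\mathbf x_0)/16$);
\item the resulting pointwise bound $Sa(\mathbf x)\lesssim w(B)^{1-1/p}(r/d)^{s_p+1}w(B(\mathbf x_0,d))^{-1}$, where $d=d(\mathbf x,\mathbf x_0)$;
\item summation over orbit annuli using $s_p+1>\mathbf N(1/p-1)$.
\end{itemize}

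Your closing fallback is unnecessary and should be dropped. The exponential decay in the orbit distance from \eqref{eq:Qt-bound} is exactly what your volume comparison uses. By contrast, a polynomial factor $(1+d/t)^{-2}$ taken on its own would cap the decay at roughly $(r/d)^{2}$, and the annular sum would then diverge whenever $s_p\geq2$.
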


\begin{proof}
    Suppose that $a$ is associated with a ball
    $B=B(\mathbf x_0,r)$.
    Since $S$ is bounded on $L^2(dw)$, using the
    doubling property and the $G$-invariance of $dw$,
    we get
    \begin{equation}
    \begin{split}
        \|Sa\|_{L^p(\mathcal O(4B),dw)}^p
        &=
        \int_{\mathcal O(4B)}
        |Sa(\mathbf x)|^p\,dw(\mathbf x)\leq
        \left(
            \int_{\mathcal O(4B)}
            |Sa(\mathbf x)|^2\,dw(\mathbf x)
        \right)^{p/2}
        w(\mathcal O(4B))^{1-p/2}\\
        &\leq
        C\|a\|_{L^2(dw)}^p w(B)^{1-p/2}
        \leq C.
    \end{split}
    \end{equation}

    We now estimate $Sa(\mathbf x)$ for
    $d(\mathbf x,\mathbf x_0)>4r$.
    Let $P(\mathbf z)$ be the Taylor polynomial of
    degree $s_p$ of the function
    $\mathbf z\mapsto Q_t(\mathbf y,\mathbf z)$
    at $\mathbf x_0$.
    By~\eqref{eq:Qt-bound}, symmetry of the kernel,
    and doubling, for $\|\mathbf x-\mathbf y\|<t$
    and $\mathbf z\in B$, we have
    \begin{equation}
    \begin{split}
        &|Q_t(\mathbf y,\mathbf z)-P(\mathbf z)|\\
        &\quad\lesssim
        \begin{cases}
            \displaystyle
            t^{-s_p-1}r^{s_p+1}
            w(B(\mathbf x_0,t))^{-1},
            &\displaystyle
            t>\frac{d(\mathbf x,\mathbf x_0)}{16},\\[6pt]
            \displaystyle
            t^{-s_p-1}r^{s_p+1}
            w(B(\mathbf x_0,d(\mathbf x,\mathbf x_0)))^{-1}
            e^{-c d(\mathbf x,\mathbf x_0)/t},
            &\displaystyle
            0<t\leq\frac{d(\mathbf x,\mathbf x_0)}{16}.
        \end{cases}
    \end{split}
    \end{equation}
    Consequently, by the cancellation condition of $a$,
    \begin{equation}
    \begin{split}
        |Q_t*a(\mathbf y)|
        &=
        \left|
            \int_B
            \bigl(Q_t(\mathbf y,\mathbf z)-P(\mathbf z)\bigr)
            a(\mathbf z)\,dw(\mathbf z)
        \right|\\
        &\lesssim
        \|a\|_{L^1(dw)}
        \begin{cases}
            \displaystyle
            t^{-s_p-1}r^{s_p+1}
            w(B(\mathbf x_0,t))^{-1},
            &\displaystyle
            t>\frac{d(\mathbf x,\mathbf x_0)}{16},\\[6pt]
            \displaystyle
            t^{-s_p-1}r^{s_p+1}
            w(B(\mathbf x_0,d(\mathbf x,\mathbf x_0)))^{-1}
            e^{-c d(\mathbf x,\mathbf x_0)/t},
            &\displaystyle
            0<t\leq\frac{d(\mathbf x,\mathbf x_0)}{16}.
        \end{cases}
    \end{split}
    \end{equation}
    Hence,
    \begin{equation}
    \begin{split}
        Sa(\mathbf x)^2
        &\lesssim
        r^{2(s_p+1)}\|a\|_{L^1(dw)}^2
        \int_0^{d(\mathbf x,\mathbf x_0)/16}
        \int_{B(\mathbf x,t)}
        \frac{t^{-2(s_p+1)}
              e^{-2c d(\mathbf x,\mathbf x_0)/t}}
             {w(B(\mathbf x_0,d(\mathbf x,\mathbf x_0)))^2}
        \frac{dw(\mathbf y)}{w(B(\mathbf x,t))}
        \frac{dt}{t}\\
        &\quad+
        r^{2(s_p+1)}\|a\|_{L^1(dw)}^2
        \int_{d(\mathbf x,\mathbf x_0)/16}^{\infty}
        \int_{B(\mathbf x,t)}
        \frac{t^{-2(s_p+1)}}{w(B(\mathbf x_0,t))^2}
        \frac{dw(\mathbf y)}{w(B(\mathbf x,t))}
        \frac{dt}{t}\\
        &\lesssim
        r^{2(s_p+1)}
        d(\mathbf x,\mathbf x_0)^{-2(s_p+1)}
        w(B(\mathbf x_0,d(\mathbf x,\mathbf x_0)))^{-2}
        \|a\|_{L^1(dw)}^2.
    \end{split}
    \end{equation}

    Finally, set
    \[
        U_j=
        \{\mathbf x\in\mathbb R^N:
        2^jr<d(\mathbf x,\mathbf x_0)\leq2^{j+1}r\},
        \qquad j\geq2.
    \]
    By $G$-invariance and doubling,
    \[
        w(U_j)\leq Cw(B(\mathbf x_0,2^jr)).
    \]
    Using also
    $\|a\|_{L^1(dw)}\leq w(B)^{1-1/p}$, we obtain
    \begin{equation*}
    \begin{split}
        &\int_{\{\mathbf x:\,d(\mathbf x,\mathbf x_0)>4r\}}
        |Sa(\mathbf x)|^p\,dw(\mathbf x)\\
        &\quad\lesssim
        \|a\|_{L^1(dw)}^p
        \sum_{j\geq2}\int_{U_j}
        r^{(s_p+1)p}
        d(\mathbf x,\mathbf x_0)^{-(s_p+1)p}
        w(B(\mathbf x_0,d(\mathbf x,\mathbf x_0)))^{-p}
        \,dw(\mathbf x)\\
        &\quad\lesssim
        \|a\|_{L^1(dw)}^p
        \sum_{j\geq2}
        r^{(s_p+1)p}(2^jr)^{-(s_p+1)p}
        w(B(\mathbf x_0,2^jr))^{1-p}\\
        &\quad\lesssim
        \sum_{j\geq2}
        2^{-j(s_p+1)p}
        w(B(\mathbf x_0,r))^{p-1}
        w(B(\mathbf x_0,2^jr))^{1-p}\\
        &\quad\lesssim
        \sum_{j\geq2}
        2^{-j(s_p+1)p}2^{j\mathbf N(1-p)}
        \leq C,
    \end{split}
    \end{equation*}
    because $(s_p+1)p>\mathbf N(1-p)$.
    Together with the local estimate, this completes
    the proof.
\end{proof}

\begin{theorem}\label{thm:atomic_inclusion}
Let $0<p\leq1$.
Under the natural identification with tempered distributions,
\[
    H^p_{\mathrm{CW},2}\subseteq H^p_{\mathrm{Dunkl}},
\]
and there exists a constant $C>0$ such that
\[
    \|f\|_{H^p_{\mathrm{Dunkl}}}
    \leq C\|f\|_{H^p_{\mathrm{CW},2}}
    \qquad(f\in H^p_{\mathrm{CW},2}).
\]
\end{theorem}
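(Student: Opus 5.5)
Given $f\in H^p_{\mathrm{CW},2}$, fix an atomic representation $f=\sum_j\lambda_ja_j$ in $\mathcal S'(\mathbb R^N)$ with $\sum_j|\lambda_j|^p\leq 2\|f\|^p_{H^p_{\mathrm{CW},2}}$. The plan is to show that the partial sums $f_n=\sum_{j\leq n}\lambda_ja_j$ form a Cauchy sequence in $\mathbb H^p_{\mathrm{Dunkl}}$. Its class in the completion $H^p_{\mathrm{Dunkl}}$ is then the candidate element, and we must check that, under the identification with tempered distributions, this class is the distribution $f$.

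\textbf{Step 1: the partial sums lie in $\mathbb H^p_{\mathrm{Dunkl}}$ and are Cauchy.} Each atom belongs to $L^2(dw)$, so $f_n\in L^2(dw)$. The square function is sublinear, $S(g+h)\leq Sg+Sh$, because it is an $L^2$ norm of the linear map $g\mapsto Q_tg$. Combining this with $(u+v)^p\leq u^p+v^p$ and Proposition~\ref{prop:atom_square_bound}, we get for $m<n$
\[
\|f_n-f_m\|^p_{\mathbb H^p_{\mathrm{Dunkl}}}\leq\int\Big(\sum_{j=m+1}^n|\lambda_j|Sa_j\Big)^p dw\leq C\sum_{j=m+1}^n|\lambda_j|^p .
\]
Hence $(f_n)$ is Cauchy in $\mathbb H^p_{\mathrm{Dunkl}}$, and its class $F$ in $H^p_{\mathrm{Dunkl}}$ satisfies $\|F\|^p_{H^p_{\mathrm{Dunkl}}}=\lim\|f_n\|^p\leq C\sum_j|\lambda_j|^p$.

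\textbf{Step 2: identification with $f$ (expected main obstacle).} We must know how $H^p_{\mathrm{Dunkl}}$ embeds into $\mathcal S'$. By Theorem~\ref{thm:operator_atomic_characterization} together with Theorem~\ref{thm:Hp_inclusion} and Lemma~\ref{lem:distributional_convergence}(ii), we obtain the estimate
\[
|\langle g,\phi\rangle|\leq C\mathfrak q_p(\phi)\|g\|_{H^p_{\mathrm{CW},2}}\leq C'\mathfrak q_p(\phi)\|g\|_{\mathbb H^p_{\mathrm{Dunkl}}}\qquad (g\in\mathbb H^p_{\mathrm{Dunkl}}),
\]
where $g\in L^2$ is viewed as a distribution via $dw$. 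The justification is that for $g\in\mathbb H^p_{\mathrm{Dunkl}}$ the operator atomic series converges in $L^2$ to $g$, and therefore also in $\mathcal S'$. It follows that the map $\mathbb H^p_{\mathrm{Dunkl}}\to\mathcal S'$ is uniformly continuous, so it extends to the completion; this extension is the natural identification. Applying it to the Cauchy sequence $(f_n)$: the distributional limit of the $f_n$ is the image of $F$. Since $f_n\to f$ in $\mathcal S'$ by the choice of the representation, $F$ corresponds to $f$.

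It remains to check that this identification is injective, or at least consistent with the identification used in Theorem~\ref{thm:operator_atomic_characterization}. That theorem already asserts a natural identification of $H^p_{\mathrm{Dunkl}}$ with a subspace of $\mathcal S'$, and I will invoke it rather than reprove injectivity. If needed, injectivity can be checked as follows: if $F\mapsto 0$ in $\mathcal S'$, then $Q_tf_n(\mathbf y)=\langle f_n,Q_t(\mathbf y,\cdot)\rangle\to 0$ pointwise, and Fatou's lemma applied to the square function gives $\|F\|=\lim\|S f_n\|_p\le\liminf_m\|S(f_n-f_m)\|_p$. This tends to $0$, using that $Q_t(\mathbf y,\cdot)\in\mathcal S$.

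Finally, taking the infimum over all representations yields $\|f\|_{H^p_{\mathrm{Dunkl}}}\leq C\|f\|_{H^p_{\mathrm{CW},2}}$.
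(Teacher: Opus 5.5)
Your proposal is correct and follows the same route as the paper: the partial sums $f_n$ are Cauchy in $\mathbb H^p_{\mathrm{Dunkl}}$ by Proposition~\ref{prop:atom_square_bound}, sublinearity of $S$ and the $p$-triangle inequality, and their limit in $H^p_{\mathrm{Dunkl}}$ is identified with $f$ through the continuous embedding $H^p_{\mathrm{Dunkl}}\hookrightarrow\mathcal S'(\mathbb R^N)$ and uniqueness of distributional limits. The only difference is that you justify this embedding, which the paper simply invokes; you derive it from Theorem~\ref{thm:operator_atomic_characterization} with $L^2$-convergence of the operator atomic series, Theorem~\ref{thm:Hp_inclusion} and Lemma~\ref{lem:distributional_convergence}, and you obtain its injectivity from Fatou's lemma together with $Q_t(\mathbf y,\cdot)\in\mathcal S(\mathbb R^N)$.
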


\begin{proof}
Let $f\in H^p_{\mathrm{CW},2}$ and choose a representation
\[
    f=\sum_{j=1}^{\infty}\lambda_ja_j
    \quad\text{in }\mathcal S'(\mathbb R^N),
    \qquad
    \sum_{j=1}^{\infty}|\lambda_j|^p<\infty.
\]
Set $f_n=\sum_{j=1}^n\lambda_ja_j$.
By the preceding proposition, each $a_j$ belongs to
$\mathbb H^p_{\mathrm{Dunkl}}$ and satisfies
$\|Sa_j\|_{L^p(dw)}\leq C$.
The sublinearity of $S$ and the inequality
$(\sum_j u_j)^p\leq\sum_j u_j^p$ for $u_j\geq0$ give,
for $n>m$,
\[
    \|f_n-f_m\|_{\mathbb H^p_{\mathrm{Dunkl}}}^p
    \leq
    \sum_{j=m+1}^{n}|\lambda_j|^p
    \|Sa_j\|_{L^p(dw)}^p
    \leq C\sum_{j=m+1}^{n}|\lambda_j|^p.
\]
Thus $(f_n)$ is Cauchy in $H^p_{\mathrm{Dunkl}}$
and converges to some $h\in H^p_{\mathrm{Dunkl}}$.
By the continuous embedding of $H^p_{\mathrm{Dunkl}}$
into $\mathcal S'(\mathbb R^N)$, this convergence also
holds in $\mathcal S'$.
Since the chosen representation gives $f_n\to f$
in $\mathcal S'$, uniqueness of the distributional
limit implies $h=f$.

Moreover,
\[
    \|f_n\|_{\mathbb H^p_{\mathrm{Dunkl}}}^p
    \leq C\sum_{j=1}^{n}|\lambda_j|^p.
\]
Passing to the limit and taking the infimum over all
atomic representations of $f$, we obtain
\[
    \|f\|_{H^p_{\mathrm{Dunkl}}}^p
    \leq C\|f\|_{H^p_{\mathrm{CW},2}}^p.
\]
This completes the proof.
\end{proof}

\section{Decomposition into \texorpdfstring{$({\rm CW},p,\infty)$}{(CW,p,infty)}-atoms}
\label{sec:bounded_atomic_decompositions}

Throughout this section, let $0<p\leq1$ and set
\[
    s=s_p=\left\lfloor\mathbf N\left(\frac1p-1\right)\right\rfloor.
\]
All polynomial spaces considered below are real.

\begin{proposition}\label{prop:atom_decomposition_linear}
There exists a constant $C>0$ such that every normalized
$({\rm CW},p,2)$-atom $a$ admits a representation
\[
    a=\sum_{k=1}^{\infty}\lambda_k b_k,
\]
where the $b_k$ are normalized $({\rm CW},p,\infty)$-atoms and
\[
    \sum_{k=1}^{\infty}|\lambda_k|^p\leq C.
\]
The series converges in $H^p_{\mathrm{CW},2}$ and
unconditionally in $\mathcal S'(\mathbb R^N)$.
The constant $C$ is independent of $a$.
\end{proposition}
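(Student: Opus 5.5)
The plan is to use the classical Calderón--Zygmund-type decomposition of an $L^2$ atom into $L^\infty$ atoms, in the form used for spaces of homogeneous type (as in Coifman--Weiss). The only structural facts needed are that $dw$ is doubling and the polynomial estimates of Corollary~\ref{coro:polynomial_derivatives_L2}. Let $a$ be associated with $B=B(\mathbf x_0,r)$, with $\|a\|_{L^2(dw)}\leq w(B)^{1/2-1/p}$. Normalize so that $w(B)^{1/p}a$ has $L^2$-average at most $1$ on $B$.

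\textbf{Step 1: set up the decomposition.} For $j\geq 0$ consider the sets
\[
\Omega_j=\{\mathbf x\in B: M_B(w(B)^{1/p}a)(\mathbf x)>2^{j}\alpha\},
\]
where $M_B$ is the Hardy--Littlewood maximal operator relative to $dw$ restricted to $B$, and $\alpha$ is a large constant. Take Whitney-type coverings of $\Omega_j$ by Euclidean balls $B_{j,i}$ that are contained in $B$ (up to dilation) and have bounded overlap; doubling of $dw$ gives these.

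Let $\pi_{j,i}$ denote the $L^2(B_{j,i}\cap\ldots,dw)$-orthogonal projection onto $\mathcal P_s$, in the weighted form used in Theorem~\ref{prop:operator_to_polynomial_atoms}, with a partition of unity $\{\eta_{j,i}\}$ subordinate to the cover. Define the good parts
\[
g_j=a\chi_{\Omega_j^c}+\sum_i \pi_{j,i}(a\eta_{j,i})\chi_{B_{j,i}}.
\]
Then $a=g_0+\sum_{j\geq0}(g_{j+1}-g_j)$ holds in $L^2$, since $w(\Omega_j)\to0$.

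\textbf{Step 2: check moments, supports and sizes.} Each difference $g_{j+1}-g_j$ is rewritten as $\sum_i \ell_{j,i}$, where each $\ell_{j,i}$ is supported in a dilate $B^*_{j,i}$ and has vanishing moments against $\mathcal P_s$. The moment identity for the projections gives the cancellation, exactly as in \eqref{eq:operator_projection_properties}. We also get $\|\ell_{j,i}\|_\infty\leq C2^{j}\alpha w(B)^{-1/p}$. The $L^\infty$ bound on the projected pieces uses Lemma~\ref{lem:polynomial_sup_L2}: the sup of the projection on a ball is controlled by the $L^2$-average of $a$ over a slightly larger ball. That larger ball meets $\Omega_j^c$, so this average is at most $C2^j\alpha w(B)^{-1/p}$.

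The term $g_0$ has vanishing moments, because $a$ does and the projections preserve moments. It is supported in $B$ and satisfies $\|g_0\|_\infty\leq C\alpha w(B)^{-1/p}$, so it is a bounded multiple of a $({\rm CW},p,\infty)$-atom.

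\textbf{Step 3: sum the coefficients.} Set $\lambda_{j,i}=C2^j\alpha w(B)^{-1/p}w(B^*_{j,i})^{1/p}$. Then
\[
\sum_{j,i}|\lambda_{j,i}|^p\lesssim w(B)^{-1}\sum_j 2^{jp}w(\Omega_j).
\]
This is dominated by $w(B)^{-1}\int_B M_B(w(B)^{1/p}a)^p\,dw$. Hölder's inequality and the $L^2$-boundedness of $M_B$ bound this by $C$. Convergence of the series in $H^p_{{\rm CW},2}$ then follows from the $\ell^p$ tails, and unconditional convergence in $\mathcal S'(\mathbb R^N)$ follows from Lemma~\ref{lem:distributional_convergence}.

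\textbf{Main obstacle.} The delicate step is arranging the telescoping $g_{j+1}-g_j$ so that each piece is genuinely localized and has vanishing moments. The cover of $\Omega_{j+1}$ need not refine the cover of $\Omega_j$. The fix I would try is the Coifman--Weiss/Latter device. First, use projections weighted by the partition of unity, $\pi_{j,i}$ defined with respect to $\eta_{j,i}\,dw$. Then insert the correction terms $\sum_k \pi_{j,i}((a-\pi_{j+1,k})\eta_{j+1,k})\eta_{j,i}$. These are supported where $B_{j+1,k}$ meets $B_{j,i}$, and bounded overlap together with comparability of radii controls them. The uniform $L^\infty$ bound on the weighted projections, which comes from Corollary~\ref{coro:polynomial_derivatives_L2} and doubling, is the key quantitative input.
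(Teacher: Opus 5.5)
Your overall route (level sets $\Omega_j$ of a maximal function, Whitney balls with a smooth partition of unity, and the telescoping $a=g_0+\sum_j(g_{j+1}-g_j)$) is the Latter--Stein scheme and can be made to work. As written, however, it breaks at exactly the step you flag as the main obstacle.

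\textbf{The correction terms do not telescope.} Your correction $\sum_k\pi_{j,i}\bigl((a-\pi_{j+1,k})\eta_{j+1,k}\bigr)\eta_{j,i}$ is projected with respect to the \emph{coarse} weight $\eta_{j,i}\,dw$ and then multiplied by $\eta_{j,i}$. It does cancel the moments of $\sum_kb_{j+1,k}\eta_{j,i}$. But by linearity, its sum over $i$ is $\sum_i\pi_{j,i}(a-g_{j+1})\eta_{j,i}$. This is the polynomial part of $a-g_{j+1}$ at level $j$, and it is not zero in general. So your $\ell_{j,i}$ do not add up to $g_{j+1}-g_j$.

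The device that works puts the projection on the \emph{fine} weight. Let $P_{j,i}\in\mathcal P_s$ be the projection of $a$ in $L^2(\eta_{j,i}\,dw)$, and set $b_{j,i}=(a-P_{j,i})\eta_{j,i}$ and $g_j=a-\sum_ib_{j,i}$. Let $P_{i,k}$ be the projection of $(a-P_{j+1,k})\eta_{j,i}$ in $L^2(\eta_{j+1,k}\,dw)$, and put
\[
\ell_{j,i}=b_{j,i}-\sum_k\bigl(b_{j+1,k}\eta_{j,i}-P_{i,k}\eta_{j+1,k}\bigr).
\]
Since $\sum_i\eta_{j,i}=1$ on $\Omega_{j+1}\subseteq\Omega_j$, the sum $\sum_iP_{i,k}$ is the $\eta_{j+1,k}$-projection of $a-P_{j+1,k}$. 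This vanishes by the definition of $P_{j+1,k}$, so $\sum_i\ell_{j,i}=g_{j+1}-g_j$. The support and $L^\infty$ bounds for $\ell_{j,i}$ then follow from two facts. First, $a$ cancels on $\Omega_{j+1}$ inside $\ell_{j,i}$. Second, $B_{j+1,k}\cap B_{j,i}\neq\emptyset$ forces $r_{j+1,k}\lesssim r_{j,i}$; this is only one-sided comparability, not the two-sided comparability of radii you invoke.

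\textbf{Secondary mismatch in Step 2.} You define $\Omega_j$ through the Hardy--Littlewood maximal function, and Step 3 needs its $L^2$-boundedness. But in Step 2 you bound projections by $L^2$-averages over balls meeting $\Omega_j^c$, and such balls only control $L^1$-averages. There are two repairs:
\begin{enumerate}
\item Use the bound $\|\pi_Uh\|_{L^\infty(U)}\le Cw(U)^{-1}\|h\|_{L^1(U,dw)}$. It follows from Lemma~\ref{lem:polynomial_sup_L2} applied to an orthonormal basis, since $\|p_{U,n}\|_\infty\le Cw(U)^{-1/2}$.
\item Alternatively, define $\Omega_j$ via $(M|a|^2)^{1/2}$ and use Kolmogorov's inequality in Step 3.
\end{enumerate}
Also use the global maximal function rather than $M_B$. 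For $\alpha$ large one has $\Omega_j\subseteq2B$ anyway, and then the dilated Whitney balls meet $\Omega_j^c$ at points where the bound is actually available.

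\textbf{How the paper avoids all this.} The paper makes a single dyadic Calder\'on--Zygmund decomposition of $|a|^2$ on a cube, at the high threshold $\Lambda=\varepsilon^{-2}w(Q)^{-2/p}$. This gives $a=C_2b+\sum_j\mu_ja_j^*$, where $b$ is a bounded atom and the $a_j^*$ are $({\rm CW},p,2)$-atoms on disjoint maximal dyadic cubes, with $\sum_j|\mu_j|^p\le C_1^{p/2}\varepsilon^{2-p}\le\frac12$. It then iterates on the $a_j^*$. The geometric decay replaces your level-by-level telescoping. Each new decomposition lives inside a single cube with sharp cut-offs, so no Whitney covers, partitions of unity, or Latter corrections are needed.
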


\begin{proof}
By doubling, balls and cubes may be used interchangeably
in the definition of atoms, up to uniform normalization
constants. Indeed, every ball is contained in a cube
of comparable measure, and every cube is contained in
a ball of comparable measure. The support and cancellation
conditions are preserved under these enlargements.
We therefore first prove the assertion for atoms
associated with cubes and convert the resulting atoms
to ball atoms at the end.

It suffices to consider real-valued atoms. Indeed, if $a$
is complex-valued, then its nonzero real and imaginary
parts separately satisfy the same support, size, and
cancellation conditions. Applying the real-valued result
to these two functions gives the assertion for $a$,
with at most twice the bound for the sum of the
$p$th powers of the coefficients.

We shall use the following polynomial estimate:
\begin{equation}\label{eq:polynomial_cube_general}
    \|P\|_{L^\infty(Q)}
    \leq
    \frac{C_0}{w(Q)^{1/2}}\|P\|_{L^2(Q,dw)},
    \qquad P\in\mathcal P_s,
\end{equation}
uniformly over all cubes $Q$ (cf. Lemma~\ref{lem:polynomial_sup_L2})

Fix a real-valued cube atom $a$ associated with a cube $Q$.
Thus $a$ vanishes almost everywhere outside $Q$,
\[
    \|a\|_{L^2(dw)}
    \leq w(Q)^{1/2-1/p},
    \qquad
    \int_Q a(\mathbf x)P(\mathbf x)\,dw(\mathbf x)=0
    \quad(P\in\mathcal P_s).
\]
Choose $C_1\geq1$ such that
\[
    w(Q')\leq C_1w(Q'')
\]
whenever $Q''$ is a dyadic child of $Q'$.
Such a constant exists by the doubling property.

Let $0<\varepsilon<1$, to be fixed below, and put
\[
    \Lambda=\varepsilon^{-2}w(Q)^{-2/p}.
\]
Since
\[
    \frac1{w(Q)}\int_Q|a|^2\,dw
    \leq w(Q)^{-2/p}<\Lambda,
\]
the maximal dyadic subcubes $Q_j$ of $Q$ satisfying
\[
    \frac1{w(Q_j)}\int_{Q_j}|a|^2\,dw>\Lambda
\]
are proper and pairwise disjoint. Maximality gives
\begin{equation}\label{eq:CZ_1_linear}
    \Lambda<
    \frac1{w(Q_j)}\int_{Q_j}|a|^2\,dw
    \leq C_1\Lambda.
\end{equation}
Moreover, dyadic differentiation yields
\begin{equation}\label{eq:CZ_2_linear}
    |a|\leq\Lambda^{1/2}
    \quad\text{$dw$-almost everywhere on }
    \Omega=Q\setminus\bigcup_jQ_j,
\end{equation}
and
\begin{equation}\label{eq:CZ_bad_measure}
    \sum_jw(Q_j)
    \leq\Lambda^{-1}\|a\|_{L^2(dw)}^2
    \leq\varepsilon^2w(Q).
\end{equation}

For every $j$, let $P_{Q_j}a\in\mathcal P_s$ denote
the orthogonal projection of $a|_{Q_j}$ onto the
restrictions of $\mathcal P_s$ in the real Hilbert
space $L^2(Q_j,dw)$.
Then
\[
    \int_{Q_j}(a-P_{Q_j}a)P\,dw=0
    \qquad(P\in\mathcal P_s),
\]
and orthogonality gives
\[
    \|P_{Q_j}a\|_{L^2(Q_j,dw)}^2
    +
    \|a-P_{Q_j}a\|_{L^2(Q_j,dw)}^2
    =
    \|a\|_{L^2(Q_j,dw)}^2.
\]
By~\eqref{eq:polynomial_cube_general}
and~\eqref{eq:CZ_1_linear},
\begin{equation}\label{eq:proj_infinity_bound}
    \|P_{Q_j}a\|_{L^\infty(Q_j)}
    \leq
    C_0
    \left(\frac1{w(Q_j)}\int_{Q_j}|a|^2\,dw\right)^{1/2}
    \leq C_0C_1^{1/2}\Lambda^{1/2}.
\end{equation}

Set
\[
    g=a\chi_\Omega+\sum_j(P_{Q_j}a)\chi_{Q_j},
    \qquad
    h_j=(a-P_{Q_j}a)\chi_{Q_j}.
\]
The cubes $Q_j$ are disjoint, and the preceding
orthogonality identity implies
\[
    \sum_j\|h_j\|_{L^2(dw)}^2\leq\|a\|_{L^2(dw)}^2.
\]
Consequently,
\begin{equation}\label{eq:one_step_atom_decomposition}
    a=g+\sum_jh_j
    \quad\text{in }L^2(dw).
\end{equation}
Each $h_j$ has vanishing moments against $\mathcal P_s$.
The same is true of $g$, by
\eqref{eq:one_step_atom_decomposition} and the
cancellation of $a$; passage to the limit is justified
by testing against $P\chi_Q\in L^2(dw)$.

By~\eqref{eq:CZ_2_linear}
and~\eqref{eq:proj_infinity_bound},
\[
    \|g\|_\infty\leq C_2w(Q)^{-1/p},
    \qquad
    C_2=\varepsilon^{-1}
    \max\{1,C_0C_1^{1/2}\}.
\]
Thus $g=C_2b$, where $b$ is a normalized
$({\rm CW},p,\infty)$-atom associated with $Q$,
unless $g=0$, in which case this term is omitted.

For each nonzero $h_j$, define
\[
    \mu_j=\|h_j\|_{L^2(dw)}\,w(Q_j)^{1/p-1/2},
    \qquad
    a_j^*=\mu_j^{-1}h_j.
\]
Then $a_j^*$ is a normalized $({\rm CW},p,2)$-atom associated
with $Q_j$. Furthermore,
\[
    \|h_j\|_{L^2(dw)}
    \leq\|a\|_{L^2(Q_j,dw)}
    \leq C_1^{1/2}\Lambda^{1/2}w(Q_j)^{1/2},
\]
so
\[
    \mu_j
    \leq C_1^{1/2}\varepsilon^{-1}
    w(Q)^{-1/p}w(Q_j)^{1/p}.
\]
Using~\eqref{eq:CZ_bad_measure}, we obtain
\[
    \sum_j|\mu_j|^p
    \leq
    C_1^{p/2}\varepsilon^{-p}w(Q)^{-1}
    \sum_jw(Q_j)
    \leq C_1^{p/2}\varepsilon^{2-p}.
\]
Fix $\varepsilon$ sufficiently small that
\[
    C_1^{p/2}\varepsilon^{2-p}\leq\frac12.
\]
We have therefore proved the one-step decomposition
\begin{equation}\label{eq:recursive_atom_decomposition}
    a=C_2b+\sum_j\mu_ja_j^*,
    \qquad
    \sum_j|\mu_j|^p\leq\frac12.
\end{equation}

We iterate~\eqref{eq:recursive_atom_decomposition}
on the remaining $({\rm CW},p,2)$-atoms.
After $n$ steps, write
\[
    a=G_n+R_n,
    \qquad
    R_n=\sum_{\eta\in\mathcal I_n}c_\eta a_\eta,
\]
where $G_n$ consists of the bounded atoms obtained
during the first $n$ steps, and
\[
    \sum_{\eta\in\mathcal I_n}|c_\eta|^p\leq2^{-n}.
\]
The sum of the $p$th powers of the coefficients of the
bounded atoms produced at step $m+1$ is at most
$C_2^p2^{-m}$. Hence the full collection of bounded
atoms has coefficients satisfying
\begin{equation}\label{eq:bounded_atom_coefficients}
    \sum_k|\lambda_k|^p
    \leq C_2^p\sum_{m=0}^{\infty}2^{-m}
    =2C_2^p.
\end{equation}

For completeness, the remainder tends to zero
in $\mathcal S'(\mathbb R^N)$.
Indeed, the uniform test-function estimate in
Lemma~\ref{lem:distributional_convergence}, together
with the ball--cube comparison, gives
\[
    |\langle a_\eta,\varphi\rangle|
    \leq Cq_p(\varphi),
    \qquad
    q_p(\varphi)=\|\varphi\|_\infty+
    \sum_{|\beta|=s+1}\|\partial^\beta\varphi\|_\infty.
\]
Since $\ell^p\subseteq\ell^1$,
\[
    |\langle R_n,\varphi\rangle|
    \leq Cq_p(\varphi)
    \sum_{\eta\in\mathcal I_n}|c_\eta|
    \leq Cq_p(\varphi)2^{-n/p}\longrightarrow0.
\]
All series arising in the iteration are well-defined
in $\mathcal S'$ by the same estimate.
By~\eqref{eq:bounded_atom_coefficients}, the series of
bounded atoms converges unconditionally in $\mathcal S'$.
Its sum is $a$, since $a-G_n=R_n\to0$.

Finally, convert the cube atoms into ball atoms.
More explicitly, if $b$ is a bounded cube atom on $Q$
and $B_Q\supset Q$ is a ball with $w(B_Q)\leq Cw(Q)$,
then
\[
    \widetilde b=
    \left(\frac{w(Q)}{w(B_Q)}\right)^{1/p}b
\]
is a normalized $({\rm CW},p,\infty)$-atom on $B_Q$.
The resulting changes in the coefficients, as well
as the initial conversion from a ball atom to a cube
atom, are uniformly bounded.

Every normalized $({\rm CW},p,\infty)$-atom is also a normalized
$({\rm CW},p,2)$-atom. Therefore the resulting representation
satisfies
\[
    \Big\|a-\sum_{k=1}^{n}\lambda_kb_k
    \Big\|_{H^p_{\mathrm{CW},2}}^p
    \leq\sum_{k>n}|\lambda_k|^p\longrightarrow0.
\]
This proves the proposition.
\end{proof}

\begin{proof}[Proof of Theorem~\ref{teo:H_p_coincides}]
The previously proved inclusions give
\[
    H^p_{\mathrm{Dunkl}}=H^p_{\mathrm{CW},2}
\]
with equivalent quasi-norms. It remains to compare
the two atomic spaces.

Every normalized $({\rm CW},p,\infty)$-atom $b$ associated with
a ball $B$ satisfies
\[
    \|b\|_{L^2(dw)}
    \leq w(B)^{1/2}\|b\|_\infty
    \leq w(B)^{1/2-1/p}.
\]
Thus it is also a normalized $({\rm CW},p,2)$-atom, and
\[
    \|f\|_{H^p_{\mathrm{CW},2}}
    \leq\|f\|_{H^p_{\mathrm{CW},\infty}}.
\]

Conversely, let $f\in H^p_{\mathrm{CW},2}$ and take
an atomic representation
\[
    f=\sum_j\beta_ja_j
    \quad\text{in }\mathcal S'(\mathbb R^N),
    \qquad
    \sum_j|\beta_j|^p<\infty.
\]
By Proposition~\ref{prop:atom_decomposition_linear},
\[
    a_j=\sum_k\gamma_{j,k}b_{j,k},
    \qquad
    \sum_k|\gamma_{j,k}|^p\leq C,
\]
where the $b_{j,k}$ are normalized $({\rm CW},p,\infty)$-atoms.
Consequently,
\[
    \sum_{j,k}|\beta_j\gamma_{j,k}|^p
    \leq C\sum_j|\beta_j|^p.
\]
Lemma~\ref{lem:distributional_convergence} shows that
the corresponding double series converges
unconditionally in $\mathcal S'$.
For every $\varphi\in\mathcal S(\mathbb R^N)$,
absolute convergence of the scalar series permits
grouping the terms:
\[
    \sum_{j,k}\beta_j\gamma_{j,k}
        \langle b_{j,k},\varphi\rangle
    =
    \sum_j\beta_j\langle a_j,\varphi\rangle
    =
    \langle f,\varphi\rangle.
\]
Hence this double series is a bounded atomic
representation of $f$, and
\[
    \|f\|_{H^p_{\mathrm{CW},\infty}}^p
    \leq C\sum_j|\beta_j|^p.
\]
Taking the infimum over the original representations
of $f$ gives
\[
    \|f\|_{H^p_{\mathrm{CW},\infty}}
    \leq C\|f\|_{H^p_{\mathrm{CW},2}}.
\]
Together with the preceding inclusions, this proves
\[
    H^p_{\mathrm{Dunkl}}
    =
    H^p_{\mathrm{CW},2}
    =
    H^p_{\mathrm{CW},\infty}
\]
with equivalent quasi-norms.
\end{proof}

\section*{Declarations}

\subsection*{Use of Generative AI and AI-Assisted Technologies}
During the preparation of this manuscript, the authors utilized Google's Gemini (Thinking 3.6) and OpenAI's GPT (GPT-6-Astra) to assist with initial proof strategy brainstorming, language refinement, typesetting equations, and drafting preliminary standard technical lemmas. 

The authors reviewed and edited all content as necessary and takes full responsibility for the accuracy, mathematical correctness, and overall integrity of the final published work.

\subsection*{Competing Interests}
The authors declare no competing financial or non-financial interests directly relevant to the content of this article.

\subsection*{Data Availability}
Data sharing is not applicable to this article as no datasets were generated or analyzed during the current study.

\end{document}